\newtheorem{Remark}{{Remark}}[section]
\newtheorem{Theorem}{Theorem}[section]
\newtheorem{Lemma}{Lemma}[section]
\numberwithin{equation}{section}
\newcommand\bes{\begin{eqnarray}} \newcommand\ees{\end{eqnarray}}
\newcommand{\bess}{\begin{eqnarray*}}
\newcommand{\eess}{\end{eqnarray*}}
\newcommand\qq{\eqref}
\newcommand\lm{\lambda}
\begin{document}
\title{{\bf\Large Global bifurcation of coexistence states for a prey-taxis
system with homogeneous Dirichlet boundary conditions}
\footnote{The first author's work was supported by NSFC Grant 11901446 and the
Postdoctoral Science Foundation of China (2021T140530). The second author's work
was supported by NSFC Grant 12171120.}
}
\author{Shanbing Li$^1$,~~Mingxin Wang$^{2,}$
\thanks{The correspondence author. E-mail address:
mxwang@hpu.edu.cn (M. Wang)}
\\
{\small $^1$\ School of Mathematics and Statistics, Xidian University, Xi'an,
710071, PR China}\\
{\small $^2$ School of Mathematics and Information Science, Henan Polytechnic
University, Jiaozuo, 454000, PR China}\\
}
\date{}        % Enter your date or \today between curly braces
\maketitle
\begin{center}
\begin{minipage}{135mm}
\noindent{\bf Abstract}--This paper is concerned with positive solutions of boundary value problems
\begin{equation*}
\left\{\begin{array}{ll}
{\rm div}\left(d(v)\nabla u-u\chi(v)\nabla v\right)+\lambda u-u^2
+\gamma u F(v)=0,  & x \in \Omega,\\[1mm]
D \Delta v+\mu v-v^2-u F(v)=0,  & x \in \Omega,\\[1mm]
u=v=0,  & x \in \partial \Omega.
\end{array}\right.
\end{equation*}
This is the stationary problem associated with the predator-prey system
with prey-taxis, and $u$ (resp. $v$) denotes the population density of
predator (resp. prey). In particular, the presence of $\chi(v)$ represents
the tendency of predators to move toward the increasing preys gradient
direction. Regarding $\lambda$ as a bifurcation parameter, we make a
detailed description for the global bifurcation structure of the set of
positive solutions. So that ranges of parameters are found for which the
system admits positive solutions.

\vskip 4mm\noindent{\bf Keywords}: Prey-taxis; Quasilinear elliptic systems;
Boundary value problems; Global bifurcation.

\vskip 4mm\noindent{\bf Mathematics Subject Classification}: 35J57,~~35J60,~~92D25
\end{minipage}
\end{center}

\section{Introduction and main results}
{\setlength\arraycolsep{2pt}

In the last twenty years, predator-prey systems with prey-taxis
proposed by Kareiva and Odell \cite{ko1987} have attracted great
attention in mathematical analysis. The introduction of prey-taxis term
into classical reaction-diffusion systems allows the mathematical models
to capture much more important features of phenomena in ecology.
Meanwhile, the presence of prey-taxis term causes enormous difficulties
in the analytical treatment. Many techniques which worked successfully
for classical reaction-diffusion systems are no longer applicable, and
many fundamental questions are left open.

Under homogeneous Neumann boundary conditions, predator-prey systems with
prey-taxis have been studied most extensively---perhaps because they
resemble attractive chemotaxis systems from a mathematical point of view,
which have been studied in comparatively great detail. One can refer to
\cite{ccw2020,jw2021,lhl2009,ww2019,wsw2016}
and references therein. However, under homogeneous Dirichlet boundary
conditions, predator-prey systems with prey-taxis has received less
attention. In fact, to the best of our knowledge, only Cintra et al. in
\cite{cms2018,cms2019} have analyzed the corresponding Dirichlet systems.

In the present article, we study positive steady-state solutions to the
following predator-prey system with prey-taxis under homogeneous Dirichlet
boundary conditions:
\begin{equation}\label{ys11}
\left\{\begin{array}{ll}
\partial_{t} u={\rm div}\left(d(v)\nabla u-u\chi(v)\nabla v\right)
+\lambda u-u^2+\gamma u F(v),  & x \in \Omega,~t>0, \\[1mm]
\partial_{t} v=D \Delta v+\mu v-v^2-u F(v),  & x \in \Omega,~t>0,\\[1mm]
u=v=0,  & x \in \partial \Omega,~t>0, \\[1mm]
u(x, 0)=u_0(x)\geq0, \quad v(x, 0)=v_0(x)\geq0,  & x \in \Omega,
\end{array}\right.
\end{equation}
where $\Omega$ is a bounded domain in $\mathbb{R}^n~(n\geq1)$ with smooth
boundary $\partial\Omega$. From the ecological viewpoint, unknown functions
$u=u(x,t)$ and $v=v(x,t)$ stand for the population densities of predator
and prey at position $x\in\Omega$ and time $t>0$, respectively;
constants $\lambda$ and $\mu$ denote the growth rates of each species,
where $\lambda\in \mathbb{R}$ and $\mu>0$; positive constant $\gamma$
accounts for the intrinsic predation rate; nonnegative function $F(v)$
represents the functional response of the predator; positive constant $D$
is the diffusion rate of the prey; the term ${\rm div}(d(v)\nabla u)$
describes the diffusion of the predator with coefficient $d(v)$; the term
$-{\rm div}(u\chi(v)\nabla v)$ describes the prey-taxis with coefficient
$\chi(v)$, which shows the tendency of predator moving toward the increasing
prey gradient direction. For more details on the backgrounds of this model,
one can refer to \cite{ko1987}.

The corresponding steady-state problem of (\ref{ys11}) is given by
\begin{equation}\label{ys12}
\left\{\begin{array}{ll}
-{\rm div}\left(d(v)\nabla u-u\chi(v)\nabla v\right)=\lambda u-u^2+
\gamma u F(v),  & x \in \Omega,\\[1mm]
-D \Delta v=\mu v-v^2-u F(v),  & x \in \Omega,\\[1mm]
%u\geq0,~~~~v\geq0,  & x \in \Omega,\\[1mm]
u=v=0,  & x \in \partial \Omega.
\end{array}\right.
\end{equation}
Specifically in the sequel, functions $d(v),\chi(v)$ and $F(v)$ are
assumed to fulfill the following conditions:

$(H_{d})$\; $d(v)\in C^2([0, \infty)),~d(v)>0$ and $d'(v)\le 0$ $[0, \infty)$;

$(H_{\chi})$\; $\chi(v)\in C^1([0, \infty)),~ \chi(v)\ge 0$ in $[0, \infty)$;

  $(H_{F})$\; $F(v)\in C^2([0, \infty)),~F(0)=0$ and $F(v)>0$ in $[0, \infty)$.\\
Obviously, the hypotheses on $F(v)$ imply that

%\begin{itemize}[itemsep= -2 pt,topsep = 0 pt, leftmargin = 30 pt]
%  \item $F^{\prime}(0)\geq 0$~~{\rm and}~~$F(v)=v \mathbb{F}(v)$,
%\end{itemize}
$$F^{\prime}(0)\geq 0~~~{\rm and}~~~F(v)=v \mathbb{F}(v),$$
where $\mathbb{F}(v)\in C^1([0, \infty))$ and $\mathbb{F}(v)>0$ in
$(0,\infty)$. The most widely used forms of $F(v)$ in the literature are:
\begin{equation*}
\begin{array}{ll}
F(v) = v~~~({\rm Lotka-Volterra~type}); &
~~~F(v) = \frac{v}{\zeta+v}~~~({\rm Holling~type~II}); \\[1mm]
F(v) = \frac{v^2}{\zeta+v^2}~~~({\rm Holling~type~III}); &
~~~F(v) = \frac{v}{\zeta+v^2}~~~({\rm Holling~type~IV}),
\end{array}
\end{equation*}
where $\zeta>0$ is a constant. Moreover, it is clear that Lotka-Volterra type,
Holling~type~II and Holling~type~III response functions are monotonic with
respect to $v$, while Holling~type~IV response function is nonmonotonic with
respect to $v$.

With the restriction $\chi(v)=-d^{\prime}(v)$, system \eqref{ys12} can be
written as
\begin{equation}\label{ys12*}
\left\{\begin{array}{ll}
\Delta\left(d(v) u\right)+\lambda u-u^2+\gamma uF(v)=0, & x \in \Omega, \\[1mm]
D \Delta v+ \mu v-{v}^2-uF(v)=0, & x \in \Omega, \\[1mm]
u=v=0, & x \in \partial\Omega.
\end{array}\right.
\end{equation}
A standard approach to deal with (\ref{ys12*}) is to apply the change of
variable $U=d(v)u$, which transforms (\ref{ys12*}) into a semilinear system,
decoupled in the diffusion (see, e.g., \cite{ln1996,y2008}). With the help
of this convenient change of variable, our previous paper \cite{lm2022}
established the existence of positive solutions of (\ref{ys12*}) by using
the theory of fixed point index in positive cones, and investigated the
limiting behaviors of positive solutions of (\ref{ys12*}) as $b$ goes to
$\infty$ in the case $d(v)=1+\frac{a}{1+bv}$, where $a,b$ are positive
constants. These results not only generalize the results obtained in
\cite{ky2009,wl2015}, but also present some new conclusions.

For the general case (without the restriction $\chi(v)=-d^{\prime}(v)$),
system \eqref{ys12} is more complicated and there is a few works. To our best
knowledge, \eqref{ys12} has only been studied in \cite{cms2018,cms2019} with
the Lotka-Volterra type response function (i.e., $F(v)=v$).

Before the formal statement of our main result, we introduce some notations
and basic facts. Let us denote by $\sigma_1(p,q;r)$ the principal eigenvalue of
\bes\left\{\begin{array}{ll}
-{\rm div}\left(p(x)\nabla \phi\right)+q(x)\phi=\sigma r(x)\phi,
~~ &x \in \Omega,\\[1mm]
\phi=0,~~ &x \in \partial\Omega,
\end{array}\right.\label{ys13}
 \ees
where $p(x)\in C^{1,\alpha}(\overline{\Omega})$, $q(x),r(x)\in C^{\alpha}
(\overline{\Omega})$ for some $\alpha\in(0,1)$, and $p(x)\geq p_0>0$ and
$r(x)>0$. In particular, when $p=r=1$ and $q=0$, we denote
 \[\sigma_1=\sigma_1(1,0;1).\]

For the fixed functions $p(x)\in C^{1,\alpha}(\overline{\Omega})$,
$b(x)\in C^{\alpha}(\overline{\Omega})$ with $\alpha\in(0,1)$,
$p(x)\geq p_0>0$ and $b(x)\geq b_0>0$, the boundary value problem
\bes\label{ys14}
\left\{\begin{array}{ll}
-{\rm div}\left(p(x)\nabla\phi\right)=a\phi-b(x)\phi^2,~~ &x \in \Omega,\\[1mm]
\phi=0,~~ &x \in \partial\Omega
\end{array}\right.
 \ees
has positive solution if and only if $a>\sigma_1(p,0;1)$, and the positive
solution is unique when it exists, denoted by $\theta_{p,a,b}$.
When $b(x)\equiv1$, we simply denote $\theta_{p,a}=\theta_{p,a,1}$. Thus,
system \eqref{ys12} admits two semitrivial solutions
$(u,v)=(\theta_{d(0),\,\lambda},0)$ and $(u,v)=(0,\theta_{D,\,\mu})$ when
$\lambda>d(0)\sigma_1$ and $\mu>D\sigma_1$, where $\theta_{d(0),\,\lambda}$
solves
\bes\label{1.6}
\left\{\begin{array}{ll}
-d(0)\Delta u=\lambda u-u^2,~~&x\in\Omega,\\[1mm]
u=0,~~ &x\in\partial\Omega,
\end{array}\right.
 \ees
and $\theta_{D,\,\mu}$ solves
\bes\label{ys16}
\left\{\begin{array}{ll}
-D\Delta v=\mu v-v^2,~~&x\in\Omega,\\[1mm]
v=0, &x\in\partial\Omega.
\end{array}\right.
 \ees
For the sake of brevity, we denote
 \[\theta_\lambda=\theta_{d(0),\,\lambda},\;\;\;\omega_\mu=\theta_{D,\,\mu}.\]
We denote two semitrivial solution branches with parameter $\lambda$ by
\begin{equation*}
\left\{\begin{array}{ll}
\Gamma_{u}=\{(\lambda,\theta_\lambda,0):
\lambda>d(0)\sigma_1\},  &  \\[1mm]
\Gamma_{v}=\{(\lambda,0,\omega_\mu): \lambda\in \mathbb{R}\} \;\;\text{with}\;\;\mu>D \sigma_1.  &
\end{array}\right.
\end{equation*}

Let $\Phi_\mu$ be the positive eigenfunction associated to the principal eigenvalue
\bes
 \lm_\mu=\sigma_1\left(d(\omega_\mu)
e^{g(\omega_\mu)},\, -\gamma F(\omega_\mu)
e^{g(\omega_\mu)};\, e^{g(\omega_\mu)}\right),
 \label{1.8a}\ees
where
$$g(\omega_\mu)=\int_0^{\omega_\mu}\frac{\chi(\tau)}{d(\tau)}
d\tau.$$
That is, $\Phi_\mu$ satisfies
\begin{equation}\label{1.9}
\left\{\begin{array}{ll}
-{\rm div}\left(d(\omega_\mu)e^{g(\omega_\mu)}\nabla
\Phi_\mu\right)-\gamma F(\omega_\mu)
e^{g(\omega_\mu)}\Phi_\mu=
\lambda_\mu e^{g(\omega_\mu)}
\Phi_\mu,& x\in\Omega,\\[1mm]
\Phi_\mu=0,& x\in\partial\Omega.
\end{array}\right.
\end{equation}
Denote
\begin{equation}
\left\{\begin{array}{ll}
\phi_\mu=e^{g(\omega_\mu)}\Phi_\mu>0,\\[1mm]
\psi_\mu=-(-D\Delta+2\omega_\mu-\mu)^{-1}
\left(F(\omega_\mu)e^{g(\omega_\mu)}\Phi_\mu\right).
\end{array}\right.
  \label{1.10}\end{equation}

Now we are able to state the main theorem of this paper.

\begin{Theorem}\label{th11}
Assume that $\mu>D\sigma_1$ is fixed. Then the following statements
hold.
\begin{itemize}[itemsep= -2 pt, topsep = 0 pt, leftmargin = 35 pt]
  \item [{\rm(1)}] There exists a positive constant $\underline{\lambda}=\underline{\lambda}(\mu)$
  such that \eqref{ys12} has no positive solution when $\lambda\leq \underline{\lambda}$.
\item [{\rm(2)}] Assume that $\min_{[0,\,\mu]}\mathbb{F}(v)>0$. Then there exists
  a positive constant $\overline{\lambda}=\overline{\lambda}(\mu)$ such that \eqref{ys12} has no positive
  solution for $\lambda>\overline{\lambda}$.
  \item [{\rm(3)}] If $F'(0)=0$, i.e., $\mathbb{F}(0)=0$, then
  \begin{itemize}[itemsep= -2 pt, topsep = 0 pt, leftmargin = 25 pt]
  \item [{\rm(3-1)}] $\lambda=\lambda_\mu$ is a bifurcation point where an unbounded
  continuum $\mathcal{C}_1$ of positive solutions to \eqref{ys12} bifurcates
  from the semitrivial solution branch $\Gamma_{v}$ at
  $(\lambda_\mu,0,\omega_\mu)$;
  \item [{\rm(3-2)}] near $(\lambda_\mu,0,\omega_\mu)$, $\mathcal{C}_1$ is a smooth curve $(\lambda(s),u(s),v(s))$ with $s\in(0,\varepsilon)$, such
  that $(\lambda(0),u(0),v(0))=(\lambda_\mu,0,\omega_\mu)$ and
  \bes\label{ys314}
   \lambda^{\prime}(0)
  &=& \left(\int_{\Omega}e^{g(\omega_\mu)}\Phi_\mu^2dx
     \right)^{-1}\left[\int_{\Omega}\left(d^{\prime}(\omega_\mu)
     +\chi(\omega_\mu)\right)e^{g\left(\omega_\mu\right)}
     \psi_\mu\left|\nabla\Phi_\mu\right|^2dx
     \right.\nonumber \\
  &&+\int_{\Omega}e^{2g(\omega_\mu)}\Phi_\mu^3dx
     -\lambda_\mu\int_{\Omega}\frac{\chi(\omega_\mu)}{d(\omega_\mu)}
     e^{g(\omega_\mu)}\psi_\mu\Phi_\mu^2dx\nonumber\\
  &&\left.-\gamma\int_{\Omega}\left(F^{\prime}(\omega_\mu)+
     \frac{\chi(\omega_\mu)}{d(\omega_\mu)}F(\omega_\mu)\right)
     e^{g(\omega_\mu)}\psi_\mu\Phi_\mu^2dx\right];
  \ees
  \item [{\rm(3-3)}] the projection of $\mathcal{C}_1$ on $\lambda$-axis:
  $\mathrm{Proj}_{\lambda}\mathcal{C}_1\supset(\lambda_\mu,\infty)$,
  and so \eqref{ys12} has at least one positive solution when
  $\lambda>\lambda_\mu$;
  \item [{\rm(3-4)}] the semitrivial solution $(\theta_\lambda,0)$ is an
  unstable steady state of \eqref{ys12} for any $\lambda>d(0)\sigma_1$,
  and there is no bifurcation of positive solutions occurring along the
  semitrivial solution branch $\Gamma_{u}$.
  \end{itemize}
  \item [{\rm(4)}] If $F'(0)>0$, i.e., $\mathbb{F}(0)>0$, then
  \begin{itemize}[itemsep= -2 pt, topsep = 0 pt, leftmargin = 25 pt]
  \item [{\rm(4-1)}] there exists a unique $\lambda^*=\lambda^*(\mu)$
  determined by $\mu=\mu_{\lambda^*}$, where
  $$\mu_\lambda=\sigma_1(D,\,\theta_\lambda F'(0);\,1),$$
  such that the semitrivial solution $(\theta_\lambda,0)$ is a
  stable steady state of \eqref{ys12} for $\lambda>\lambda^*$ and
  it is an unstable steady state of \eqref{ys12} for $\lambda<\lambda^*$;
  \item [{\rm(4-2)}] $\lambda^*(\mu)$ is strictly increasing with respect to
  $\mu$, and
  $$\lim_{\mu\rightarrow D \sigma_1}\lambda^*(\mu)=
  d(0) \sigma_1,~~~{\rm and}~~~\lim_{\mu\rightarrow\infty}
  \lambda^*(\mu)=\infty;$$
  \item [{\rm(4-3)}] $\lambda=\lambda^*$ is a bifurcation point where a continuum
  $\mathcal{C}_2$ of positive solutions to \eqref{ys12} bifurcates from the
  semitrivial solution branch $\Gamma_{u}$ at
  $(\lambda^*,\theta_{\lambda^*},0)$, and near
  $(\lambda^*,\theta_{\lambda^*},0)$, $\mathcal{C}_2$ is a smooth
  curve $(\lambda(s),u(s),v(s))$ with $s\in(0,\varepsilon)$ such that
  $(\lambda(0),u(0),v(0))=(\lambda^*,\theta_{\lambda^*},0)$;
  \item [{\rm(4-4)}] $\mathcal{C}_2$ can be extended to a bounded global continuum
  of positive solutions to \eqref{ys12}, which meets the semitrivial solution
  branch $\Gamma_{v}$ at $(\lambda_\mu,0,\omega_\mu)$, and so \eqref{ys12}
  has at least one positive solution for $\lambda\in(\lambda_\mu,\lambda^*)$
  or $\lambda\in(\lambda^*,\lambda_\mu)$.
  \end{itemize}
\end{itemize}
\end{Theorem}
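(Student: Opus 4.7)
The plan is to combine a change of variable that linearizes the principal part of the first equation, a priori estimates via principal-eigenvalue monotonicity, the Crandall--Rabinowitz local bifurcation theorem, and Rabinowitz's global alternative. I would first perform the substitution $w=e^{-g(v)}u$, where $g(v)=\int_0^v \chi(\tau)/d(\tau)\,d\tau$, using the algebraic identity $d(v)\nabla u - u\chi(v)\nabla v=d(v)e^{g(v)}\nabla w$. The transformed first equation reads $-\mathrm{div}(d(v)e^{g(v)}\nabla w) = e^{g(v)}w[\lambda+\gamma F(v)-e^{g(v)}w]$, so the $(w,v)$-system is semilinear with triangular principal part and is equivalent to \eqref{ys12} under positivity; moreover the linearization at $(0,\omega_\mu)$ is self-adjoint with respect to $\int\cdot\,e^{g(\omega_\mu)}dx$ in its $w$-component and decouples at leading order, which makes the usual bifurcation machinery directly applicable.

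For (1) and (2), any positive solution satisfies $v\le \|\omega_\mu\|_\infty$ by a sub/super-solution comparison on the $v$-equation. For (1), dividing the $w$-equation by $w>0$ characterizes $\lambda$ as $\sigma_1(d(v)e^{g(v)},\,e^{g(v)}(u-\gamma F(v));\,e^{g(v)})$; monotonicity in the potential gives $\lambda\ge \sigma_1(d(v)e^{g(v)},\,-\gamma e^{g(v)}F(v);\,e^{g(v)})$, and the infimum of this quantity over the compact family $v\in[0,\|\omega_\mu\|_\infty]$ yields the threshold $\underline\lambda$. For (2), the $v$-equation yields $\mu = \sigma_1(D,\,v+u\mathbb{F}(v);\,1)$; since $\mathbb{F}\ge c_0>0$ on $[0,\|\omega_\mu\|_\infty]$ by hypothesis, monotonicity bounds $u$ from above in the integral sense, and elliptic regularity then promotes this to an $L^\infty$-bound. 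Feeding the bound back into the eigenvalue characterization of $\lambda$ forces $\lambda\le \overline\lambda$.

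At $(0,\omega_\mu)$, the $w$-linearization is exactly \eqref{1.9}; $\lambda_\mu$ is a simple eigenvalue and gives the unique bifurcation value. The $v$-component is $V=\psi_\mu$ from \eqref{1.10}, well-defined because $\sigma_1(D,2\omega_\mu-\mu;1)>\sigma_1(D,\omega_\mu-\mu;1)=0$ (since $\omega_\mu$ is an eigenfunction of $-D\Delta+\omega_\mu-\mu$ with eigenvalue zero, as follows from $-D\Delta\omega_\mu=\omega_\mu(\mu-\omega_\mu)$). Crandall--Rabinowitz then yields the smooth local branch (3-2); to derive \eqref{ys314}, I would substitute the ansatz $\lambda(s)=\lambda_\mu+s\lambda'(0)+O(s^2)$, $u(s)=s\phi_\mu+s^2 u_2+O(s^3)$, $v(s)=\omega_\mu+s\psi_\mu+s^2 v_2+O(s^3)$ into \eqref{ys12}, collect the $O(s^2)$ term of the $u$-equation, and test it against $\Phi_\mu$ in the weighted inner product $\int\cdot\,e^{g(\omega_\mu)}dx$; the terms involving $d'(\omega_\mu)$, $\chi(\omega_\mu)$, $F'(\omega_\mu)$ come from Taylor-expanding the coefficients around $\omega_\mu$ and integrating by parts. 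For part (4), linearizing at $(\theta_\lambda,0)$ yields the bifurcation/stability threshold $\mu=\sigma_1(D,\theta_\lambda F'(0);1)=\mu_\lambda$; strict monotonicity of $\theta_\lambda$ in $\lambda$, with $\theta_\lambda\to 0$ as $\lambda\downarrow d(0)\sigma_1$ and $\theta_\lambda\to\infty$ as $\lambda\to\infty$, gives (4-1)--(4-2); stability toggles at the sign change of $\mu-\mu_\lambda$, and Crandall--Rabinowitz supplies (4-3).

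Rabinowitz's global alternative extends the local continua to global ones $\mathcal{C}_1$ and $\mathcal{C}_2$. For (3-3), $\mathcal{C}_1$ is unbounded; on any bounded $\lambda$-interval the a priori $L^\infty$-bounds from the previous step keep $(u,v)$ bounded, so unboundedness occurs in the $\lambda$-direction, and combined with the lower bound $\lambda>\underline\lambda$ this forces $\mathrm{Proj}_\lambda\mathcal{C}_1\supset(\lambda_\mu,\infty)$. For (4-4), the same a priori bounds together with (1)--(2) confine $\mathcal{C}_2$ to a bounded region, so it must return to a trivial branch; simple-eigenvalue analysis leaves the only possibility as $\Gamma_v$ at $(\lambda_\mu,0,\omega_\mu)$. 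I expect the hardest step to be establishing uniform $L^\infty$-bounds for $u$ on bounded-$\lambda$ regions: the prey-taxis drift must be absorbed via the weighted $w$-variable through a Moser-type iteration starting from the bound $v\le\omega_\mu$ and proceeding through $L^p$-estimates on the semilinear $w$-equation.
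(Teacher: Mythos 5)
Your overall architecture mirrors the paper's: change of variable $w=e^{-g(v)}u$, eigenvalue characterization of $\lambda$ for the a priori bounds, Crandall--Rabinowitz at $(0,\omega_\mu)$ and $(\theta_\lambda,0)$, and a global bifurcation alternative to extend $\mathcal{S}_1,\mathcal{S}_2$. Parts (1), (3), (4) and the $\lambda'(0)$ computation are all conceptually in line with what the paper does (the paper uses the unilateral global bifurcation theorem for quasilinear elliptic systems from Cintra--Morales-Rodrigo--Su\'{a}rez rather than vanilla Rabinowitz, which is needed to stay inside the positive cone, but that is a reference you could swap in).

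However, your argument for part (2) has a genuine gap, and the direction of reasoning is backwards. You claim that the relation $\mu=\sigma_1(D,\,v+u\mathbb{F}(v);\,1)$ together with $\mathbb{F}\ge c_0>0$ ``bounds $u$ from above in the integral sense, and elliptic regularity then promotes this to an $L^\infty$-bound.'' This is false: the principal eigenvalue $\sigma_1(D,\,c_0u;\,1)\le\mu$ does not give an $L^p$- or $L^\infty$-bound on the potential $u$; the potential can be arbitrarily large on small sets without raising the eigenvalue. The paper's strategy is the opposite: it proves a \emph{lower} bound on $w$ that grows with $\lambda$. Since $w$ is a supersolution of a logistic problem with growth rate $\lambda$, one gets $w\ge\tau(\lambda)\phi_\lambda$ with $\tau(\lambda)\to\infty$, and then the fixed quantity $\mu$ must dominate $h(\lambda)=\sigma_1\bigl(D,\,\tau(\lambda)\phi_\lambda\min_{[0,\mu]}\mathbb{F};\,1\bigr)$. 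The delicate point --- entirely absent from your sketch --- is that $\phi_\lambda$ is the eigenfunction of a $\lambda$-dependent operator $-\mathrm{div}(d(v_\lambda)e^{g(v_\lambda)}\nabla\cdot)$, so one must rule out that $\phi_\lambda$ concentrates or degenerates as $\lambda\to\infty$ and thereby keeps $h(\lambda)$ bounded. The paper handles this with a $G$-convergence / homogenization argument for the family of operators, identifying a limiting eigenfunction $\phi_0>0$ and reaching a contradiction from $\int_\Omega\phi_\lambda\varphi_\lambda^2\,dx\to\int_\Omega\phi_0\varphi_0^2\,dx>0$. Without an argument of this kind, the nonexistence for large $\lambda$ is not established, and your step ``feeding the bound back into the eigenvalue characterization of $\lambda$'' never gets off the ground because no such bound on $u$ exists. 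You also underestimate where the difficulty lies: the uniform $W^{2,p}$ a priori bound for $(u,v)$ on bounded $\lambda$-ranges (your ``hardest step'') is routine once $v\le\mu$ and $\|w\|_\infty$ are in hand; the genuinely hard step in this theorem is precisely part (2).
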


The rest of this paper is arranged as follows. In section 2, we establish
the $C(\overline{\Omega})$-estimate and $W^{2,p}(\Omega)$-estimate for
 positive solutions of \eqref{ys12}. In section 3, we complete the proof of
Theorem \ref{th11}. The significance of current studies is outlined and some
further works are presented in section 4.

\section{A priori estimates}

In this section, we will show some a priori estimates for positive solutions
of \eqref{ys12}. It is well-known that the priori estimates are the fundamental
in yielding the existence and nonexistence of positive solution of \eqref{ys12}.

We first prove the $C(\overline{\Omega})$-estimate for positive solutions
of \eqref{ys12}.

\begin{Lemma}\label{le21}\; If $\mu\leq D \sigma_1$, then \eqref{ys12} has no positive solution. If $\mu> D\sigma_1$, then positive solutions  $(u,v)$ of \eqref{ys12} satisfy
\begin{equation*}
\max_{\overline{\Omega}}u\leq e^{\int_0^{\mu}\frac{\chi(s)}{d(s)}ds}
\left(|\lambda|+\gamma\max\limits_{[0,\,\mu]}F(v)\right),~~~~
\max_{\overline{\Omega}}v\leq \mu.
\end{equation*}
\end{Lemma}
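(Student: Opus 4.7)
The plan is to handle the two equations separately: first control $v$ (whose equation carries no taxis term), and then exploit the bound on $v$ to estimate $u$ via a change of variables that absorbs the prey-taxis drift.

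For the $v$-equation, since $u\geq 0$ and $F(v)\geq 0$, we have $-D\Delta v\leq \mu v-v^2$. At an interior maximum point $x_0$ of $v$ (which exists because $v>0$ in $\Omega$ and $v=0$ on $\partial\Omega$), $-\Delta v(x_0)\geq 0$ yields $v(x_0)(\mu-v(x_0))\geq u(x_0)F(v(x_0))\geq 0$, and hence $\max_{\overline{\Omega}}v\leq\mu$. To preclude positive solutions when $\mu\leq D\sigma_1$, I would multiply the $v$-equation by the positive principal eigenfunction $\varphi_1$ of $-\Delta$ with homogeneous Dirichlet boundary condition and integrate by parts, obtaining
\begin{equation*}
(D\sigma_1-\mu)\int_\Omega v\varphi_1\,dx=-\int_\Omega v^2\varphi_1\,dx-\int_\Omega uF(v)\varphi_1\,dx<0,
\end{equation*}
which forces $\mu>D\sigma_1$.

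For the $u$-bound under $\mu>D\sigma_1$, the key device is the change of variables $w=e^{-g(v)}u$ with $g(v)=\int_0^v\chi(\tau)/d(\tau)\,d\tau$; since $g'(v)=\chi(v)/d(v)$, a direct computation yields
\begin{equation*}
d(v)\nabla u-u\chi(v)\nabla v=d(v)\,e^{g(v)}\,\nabla w,
\end{equation*}
so the first equation of \eqref{ys12} becomes
\begin{equation*}
-\mathrm{div}\!\left(d(v)e^{g(v)}\nabla w\right)=e^{g(v)}w\bigl(\lambda+\gamma F(v)-u\bigr).
\end{equation*}
At an interior maximum point $x_0$ of $w$ (note $w=0$ on $\partial\Omega$ and $w>0$ in $\Omega$), $\nabla w(x_0)=0$ and $\Delta w(x_0)\leq 0$, so the left-hand side is nonnegative at $x_0$; consequently $u(x_0)\leq \lambda+\gamma F(v(x_0))\leq |\lambda|+\gamma\max_{[0,\mu]}F$. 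Because $g\geq 0$ gives $e^{g(v)}\geq 1$, we have $w(x_0)\leq u(x_0)$, and therefore $\max_{\overline{\Omega}}w\leq |\lambda|+\gamma\max_{[0,\mu]}F$. Combining this with $\max v\leq\mu$, the inequality $u=e^{g(v)}w\leq e^{g(\mu)}\max w$ reproduces precisely the claimed estimate.

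The main technical point is identifying the substitution $w=e^{-g(v)}u$: it recasts the quasilinear prey-taxis equation as a uniformly elliptic equation in pure divergence form, with no first-order terms in $w$, enabling a one-point maximum-principle argument. Everything else is routine---a Rayleigh-quotient test for the nonexistence part, and a pointwise maximum principle for the upper bounds---so I do not anticipate any serious obstacle beyond correctly verifying the divergence identity above.
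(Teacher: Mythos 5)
Your proof is correct and follows essentially the same route the paper takes: bound $v$ by a one-point maximum-principle argument, then recast the $u$-equation in pure divergence form via $w=e^{-g(v)}u$ and repeat the maximum-principle argument, using $e^{-g}\leq 1$ and $g(v)\leq g(\mu)$ to recover the stated estimate. The only cosmetic difference is in the nonexistence part: you test the $v$-equation against the principal eigenfunction $\varphi_1$, whereas the paper multiplies by $v$ and invokes the Poincar\'e inequality $\sigma_1\int_\Omega v^2\leq\int_\Omega|\nabla v|^2$; both exploit the variational characterization of $D\sigma_1$ and give the identical conclusion.
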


\begin{proof} Let $(u,v)$ be a positive solution of \eqref{ys12}. Multiplying \eqref{ys12} by $v$ and integrating the result by parts, we have
\begin{equation*}
D\int_{\Omega}|\nabla v|^2dx=\int_{\Omega}\left(\mu v-v^2-uF(v)\right)vdx
<\mu\int_{\Omega}v^2dx.
\end{equation*}
It is deduced that
   $$D\sigma_1\int_{\Omega}v^2dx<\mu\int_{\Omega}v^2dx$$
by the Poincar\'{e} inequality:
$$\sigma_1\int_{\Omega}v^2dx\leq
\int_{\Omega}|\nabla v|^2dx,\;\;\forall\; v\in H^1_0(\Omega).$$
Thus, $\mu>D\sigma_1$ since $v>0$ in $\Omega$.

Now we assume that $\mu> D\sigma_1$, and $(u,v)$ is a positive solution of
\eqref{ys12}. Let $v(x_1)=\max_{\overline{\Omega}}v(x)$. Then $x_1\in\Omega$
and
\begin{equation*}
0\leq-D\Delta v(x_1)=v(x_1)\left(\mu-v(x_1)-u(x_1)
\mathbb{F}(v(x_1))\right).
\end{equation*}
Hence, $\max_{\overline{\Omega}}v(x)=v(x_1)\leq \mu$.

Let $w=e^{-g(v)}u$ with
 $$g(v)=\int_0^{v}\frac{\chi(\tau)}{d(\tau)}d\tau.$$
Then, by the equations of $u$,
\begin{equation}\label{ys21}
\left\{\begin{array}{ll}
-{\rm div}\left(d(v)e^{g(v)}\nabla w\right)=e^{g(v)}w\left(\lambda
-e^{g(v)}w+\gamma F(v)\right), & x\in\Omega,\\[1mm]
w=0, & x\in\partial\Omega.
\end{array}\right.
\end{equation}
Let $w(x_2)=\max_{\overline{\Omega}}w(x)$. Then
\begin{equation*}
\left.{\rm div}\left(d(v)e^{g(v)}\nabla w\right)\right|_{x=x_2}
=\left.\nabla\left(d(v)e^{g(v)}\right)\cdot\nabla w\right|_{x=x_2}
\left.+d(v)e^{g(v)}\Delta w\right|_{x=x_2}\leq 0.
\end{equation*}
It follows from (\ref{ys21}) that
\begin{equation*}
w(x_2)\leq e^{-g(v(x_2))}\left(\lambda+\gamma F(v(x_2))\right)\leq
|\lambda|+\gamma\max_{[0,\,\mu]}F(v).
\end{equation*}
Since $w=e^{-g(v)}u$, we have
\begin{equation*}
u(x)=e^{g(v(x))}w(x)\leq e^{g(\mu)}w(x_2)\leq e^{\int_0^{\mu}
\frac{\chi(s)}{d(s)}ds}\left(|\lambda|+\gamma\max_{[0,\,\mu]}F(v)\right)
\end{equation*}
for any $x\in\Omega$. This gives the desired estimate for $u$ in $\Omega$.
\end{proof}

We next prove the $W^{2,p}(\Omega)$-estimate for positive solutions of
\eqref{ys12}.

\begin{Lemma}\label{le22} Let $\mu>D\sigma_1$ be fixed. Then for any $p\in(1,\infty)$,
there exists a positive number $M$, depending on the parameters of
\eqref{ys12}, such that any positive solution $(u,v)$ of \eqref{ys12} satisfies
\begin{equation*}
\|(u,v)\|_{W^{2,p}(\Omega)}\leq M.
\end{equation*}
\end{Lemma}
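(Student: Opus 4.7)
The plan is to combine the pointwise bounds from Lemma~\ref{le21} with a two-stage elliptic regularity argument. Because the $v$-equation is a semilinear Poisson equation whose right-hand side contains no derivatives of $u$, one can promote $v$ to $W^{2,p}\cap C^{1,\alpha}(\overline\Omega)$ first; once $\nabla v$ is controlled in $L^\infty$, the $u$-equation (equivalently, the transformed equation \eqref{ys21} for $w=e^{-g(v)}u$) becomes a genuine linear uniformly elliptic problem to which standard $L^p$-theory applies.

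\textbf{Step 1 (regularity of $v$).} By Lemma~\ref{le21}, $\|u\|_{L^\infty(\Omega)}+\|v\|_{L^\infty(\Omega)}\le K$ for some $K$ depending only on the structural parameters. The $v$-equation $-D\Delta v=\mu v-v^2-uF(v)$ with zero Dirichlet data then has right-hand side bounded in $L^\infty(\Omega)$, so the Calder\'on--Zygmund estimates (e.g.\ Gilbarg--Trudinger, Thm.~9.15) yield $\|v\|_{W^{2,p}(\Omega)}\le C_p$ for every $p\in(1,\infty)$. Choosing $p>n$ and using $W^{2,p}\hookrightarrow C^{1,\alpha}(\overline\Omega)$, we obtain uniform bounds on $\|v\|_{C^{1,\alpha}}$ and in particular on $\|\nabla v\|_{L^\infty}$.

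\textbf{Step 2 (regularity of $w$, hence $u$).} Work with $w=e^{-g(v)}u$, which solves \eqref{ys21}. Using $g'(v)=\chi(v)/d(v)$ and expanding the divergence in non-divergence form gives
\[
-d(v)e^{g(v)}\Delta w-\bigl(d'(v)+\chi(v)\bigr)e^{g(v)}\nabla v\cdot\nabla w=e^{g(v)}w\bigl(\lambda-e^{g(v)}w+\gamma F(v)\bigr).
\]
Since $v\in[0,\mu]$ and $d>0$ on $[0,\mu]$, the principal coefficient $d(v)e^{g(v)}$ is continuous on $\overline\Omega$ and bounded above and below by positive constants; by Step~1 the first-order coefficient is bounded in $L^\infty$; and the right-hand side is bounded in $L^\infty$ because $w$ and $v$ are. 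The interior and boundary $L^p$-estimates for linear elliptic equations in non-divergence form (again Gilbarg--Trudinger, Thm.~9.15) yield $\|w\|_{W^{2,p}(\Omega)}\le\widetilde C_p$ for every $p\in(1,\infty)$. Finally, $u=e^{g(v)}w$ and the chain rule express $\Delta u$ as a combination of $e^{g(v)}\Delta w$, a cross term in $\nabla v\cdot\nabla w$, and terms of the form $e^{g(v)}\bigl(g''(v)|\nabla v|^2+g'(v)\Delta v\bigr)w$, each of which is controlled in $L^p$ by Steps~1 and~2; hence $\|u\|_{W^{2,p}(\Omega)}\le M_p$.

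The main difficulty is conceptual rather than technical: a direct $L^p$-attack on the coupled system is obstructed by the prey-taxis cross terms $\chi(v)\nabla v\cdot\nabla u$ and $u\chi(v)\Delta v$ that appear once the divergence in the $u$-equation is expanded, because they link the top-order derivatives of $u$ and $v$. The useful observation that unlocks the proof is that the $v$-equation is decoupled from those cross terms, so a single application of the Calder\'on--Zygmund estimate to $v$ alone suffices to control $\nabla v$ in $L^\infty$ and $\Delta v$ in $L^p$; after that, no iterative bootstrap is needed and the estimate for $u$ (or equivalently for $w$) follows immediately from the linear theory.
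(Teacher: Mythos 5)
Your proposal is correct and follows essentially the same two-stage strategy as the paper's own proof: use the $L^\infty$ bounds of Lemma~\ref{le21} to obtain $W^{2,p}$ regularity of $v$ first, and then treat the (transformed) $u$-equation as a linear uniformly elliptic problem with coefficients controlled by the $C^1$ bound on $v$. The only cosmetic difference is that you push $w=e^{-g(v)}u$ all the way to $W^{2,p}$ and transfer the bound back to $u$ via the chain rule, whereas the paper stops at a $C^1(\overline\Omega)$ bound for $w$ and $u$ and then applies $L^p$ theory to the nondivergence form of the original $u$-equation; the essential decoupling idea is the same.
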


\begin{proof}
In this proof, we shall use $M_{i}$ to denote generic positive constants
depending on the parameters of system \eqref{ys12}. By Lemma \ref{le21},
$\|\mu v-v^2-u F(v)\|_{L^\infty(\Omega)}\leq M_1$. It follows from the $L^p$ theory that $\|v\|_{W^{2,p}(\Omega)}\leq M_2$. Then the Sobolev embedding
theorem yields
\begin{equation}\label{ys23}
\|v\|_{C^1(\overline{\Omega})}\leq M_3.
\end{equation}
Likewise, apply the above analysis to \eqref{ys21} to obtain
\begin{equation}\label{ys24}
\|e^{-g(v)}u\|_{C^1(\overline{\Omega})}=\|w\|_{C^1(\overline{\Omega})}
\leq M_{4}
\end{equation}
since $e^{g(v)}w(\lambda-e^{g(v)}w+\gamma F(v))\in L^\infty(\Omega)$ and $d(\mu)\leq d(v)e^{g(v)}\leq d(0)e^{g(\mu)}$. Noticing that
\begin{equation*}
\nabla w=\nabla\left(e^{-g(v)}u\right)=e^{-g(v)}\nabla u
-\frac{\chi(v)}{d(v)}e^{-g(v)}u\nabla v.
\end{equation*}
It follows from Lemma \ref{le21} that, by use of monotonicity of $d$ and $g$,
\begin{equation}
 e^{-g(\mu)}|\nabla u|\leq\left|e^{-g(v)}\nabla u\right|\leq
\left|\nabla\left(e^{-g(v)}u\right)\right|+
\left|\frac{\chi(v)}{d(v)}e^{-g(v)}u\nabla v\right|\leq \left|\nabla\left(e^{-g(v)}u\right)\right|+
\frac{\max_{[0,\,\mu]}\chi(v)}{d(\mu)}|u\nabla v|.
\label{ys25}\end{equation}
In accordance with Lemma \ref{le21} and \eqref{ys23}, we have $|u\nabla v|\leq M_{5}$.
This, combining with \qq{ys23} and \qq{ys25}, allows us to derive
\begin{equation}\label{ys27}
\left\|u\right\|_{C^1(\overline{\Omega})}\leq M_{6}.
\end{equation}

Rewrite the equations of $u$ as
\begin{equation*}
\left\{\begin{array}{ll}
-\Delta u=\frac1{d(v)}\left(\left(d'(v)-\chi(v)\right)\nabla u\cdot\nabla v
-u\chi'(v)|\nabla v|^2-u\chi(v)\Delta v+\lambda u-u^2+\gamma u F(v)\right),
& x\in\Omega,\\[1mm]
u=0, & x\in\partial\Omega.
\end{array}\right.
\end{equation*}
{In view of \eqref{ys23}, \eqref{ys27}, and the hypotheses $(H_{d})$,
$(H_{\chi})$ and $(H_{F})$, it follows that}
\begin{equation*}
\left\|\frac1{d(v)}\left(\left(d'(v)-\chi(v)\right)\nabla u\cdot\nabla v
-u\chi'(v)|\nabla v|^2-u\chi(v)\Delta v+\lambda u-u^2+\gamma u F(v)\right)
\right\|_{L^{p}(\Omega)}\leq M_{7}.
\end{equation*}
Then the $L^p$ theory gives the estimate of $\|u\|_{W^{2,p}(\Omega)}$.
\end{proof}

\section{Proof of Theorem \ref{th11}}

In this section, we will show our main theorem. We first state some
properties of the linear eigenvalue problem (\ref{ys13}) (see, e.g.,
 \cite[Lemma 2.2]{cms2018}) and the diffusive logistic equation (\ref{ys14})
(see, e.g., \cite[Lemma 2.3]{cms2018}).

\begin{Lemma}\label{le31}\; The principal eigenvalue $\sigma_1(p, q; r)$  of \eqref{ys13} satisfies
\begin{equation*}
\sigma_1(p, q; r)=
\inf_{\psi\in H^1_0(\Omega),\psi\neq0}\frac{\int_{\Omega}p(x)
|\nabla \psi|^2dx+\int_{\Omega}q(x)\psi^2 dx}{\int_{\Omega}r(x)\psi^2 dx}.
\end{equation*}
Moreover,
\begin{itemize}[itemsep= -2 pt,topsep = 0 pt, leftmargin = 25 pt]
  \item [(1)] $\sigma_1(p, q; r)$ is increasing
  with respect to $p$;

\item [(2)] $\sigma_1(p, q; r)$ is increasing
  with respect to $q$;

\item [(3)] the monotonicity of
  $\sigma_1(p, q; r)$ with respect to $r$
  depends on the sign of $\sigma_1(p,q;1)$, and
  \begin{itemize}[itemsep= -2 pt,topsep = -2 pt, leftmargin = 18 pt]
  \item [(a)] if $\sigma_1(p,q;1)>0$, then
  $\sigma_1(p, q; r)$ is positive and decreasing
  with respect to $r$;

\item [(b)] if $\sigma_1(p,q;1)=0$, then  $\sigma_1(p, q; r)=0$ for every $r$;

\item [(c)] if $\sigma_1(p,q;1)<0$, then  $\sigma_1(p, q; r)$ is negative and increasing with respect to $r$.
  \end{itemize}
\end{itemize}
\end{Lemma}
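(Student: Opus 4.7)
The plan is to base everything on the variational (Rayleigh quotient) characterization and then read off each monotonicity statement from it. Throughout I will abbreviate $N[\psi] := \int_\Omega p|\nabla\psi|^2\,dx + \int_\Omega q\psi^2\,dx$ and $D_r[\psi] := \int_\Omega r\psi^2\,dx$.

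First I would establish that $\sigma_1(p,q;r)$ equals the infimum $\inf_{\psi\in H^1_0(\Omega),\,\psi\neq 0} N[\psi]/D_r[\psi]$, that this infimum is attained, and that the minimizer $\phi_1$ can be chosen strictly positive in $\Omega$ and is simple. This is standard spectral theory: viewed as an unbounded self-adjoint operator on the weighted space $L^2(\Omega,r\,dx)$, the problem \eqref{ys13} has compact resolvent (Rellich--Kondrachov), so the direct method of the calculus of variations produces a minimizer, and replacing $\psi$ by $|\psi|$ combined with the strong maximum principle and the Krein--Rutman theorem yields positivity and simplicity. With the Rayleigh quotient in hand, assertions (1) and (2) are immediate: pointwise increasing either $p$ or $q$ only increases $N[\psi]$ for every test $\psi\in H^1_0(\Omega)$, while $D_r[\psi]$ is untouched; hence the infimum cannot decrease.

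Second, I would prove that the sign of $\sigma_1(p,q;r)$ is independent of the particular positive weight $r$ chosen, and so agrees with the sign of $\sigma_1(p,q;1)$. If $\sigma_1(p,q;1)>0$, then $N[\psi]>0$ for every nontrivial $\psi$, and since the minimizer for weight $r$ is strictly positive in $\Omega$, both $N$ and $D_r$ evaluate to strictly positive numbers, giving $\sigma_1(p,q;r)>0$. If $\sigma_1(p,q;1)<0$, some test $\psi_0$ realizes $N[\psi_0]<0$, whence $N[\psi_0]/D_r[\psi_0]<0$ and $\sigma_1(p,q;r)<0$. The borderline case $\sigma_1(p,q;1)=0$ is handled by plugging the zero-eigenfunction into the $r$-weighted quotient (giving $\sigma_1(p,q;r)\le 0$) and invoking the previous step to exclude negative values.

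Third, I would prove the monotonicity of $\sigma_1(p,q;r)$ in $r$ by a sign-dependent choice of test function. Fix $0<r_1\le r_2$ with strict inequality on a set of positive measure. In case (a), let $\phi_1$ be the principal eigenfunction associated with $r_1$; since $\phi_1>0$ and $r_1<r_2$ on a positive measure set one has $D_{r_1}[\phi_1]<D_{r_2}[\phi_1]$, and then
\begin{equation*}
\sigma_1(p,q;r_2)\;\le\;\frac{N[\phi_1]}{D_{r_2}[\phi_1]}\;<\;\frac{N[\phi_1]}{D_{r_1}[\phi_1]}\;=\;\sigma_1(p,q;r_1),
\end{equation*}
using $N[\phi_1]>0$. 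In case (c) I would instead test with the eigenfunction $\phi_2$ for $r_2$: now $N[\phi_2]<0$, so dividing by the strictly smaller positive denominator $D_{r_1}[\phi_2]$ makes the ratio \emph{more} negative, yielding
\begin{equation*}
\sigma_1(p,q;r_1)\;\le\;\frac{N[\phi_2]}{D_{r_1}[\phi_2]}\;<\;\frac{N[\phi_2]}{D_{r_2}[\phi_2]}\;=\;\sigma_1(p,q;r_2).
\end{equation*}
Case (b) follows from the sign analysis of the second step (the identically zero infimum for weight $1$ is attained by $\phi$, and plugging $\phi$ into the $r$-weighted quotient gives $0$ for every $r$).

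The only delicate point I anticipate is precisely this sign-dependent choice of test eigenfunction: in case (a) one must pick $\phi_1$, while in case (c) one must pick $\phi_2$, so that a negative numerator paired with a strictly smaller denominator flips the direction of the inequality. Everything else reduces to monotonicity bookkeeping with the Rayleigh quotient and to the classical existence theory that produces a positive principal eigenfunction.
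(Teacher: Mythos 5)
The paper does not prove this lemma itself; it quotes it verbatim (as Lemma~2.2 of Cintra--Morales-Rodrigo--Su\'arez) and only invokes it. Your argument is a correct proof along the natural lines, and all of the pieces work: the Rayleigh quotient characterization, positivity and simplicity of the principal eigenfunction via Krein--Rutman, the immediate monotonicity in $p$ and $q$ from the numerator, and the sign-independence of $\sigma_1(p,q;\cdot)$ in $r$ as a precursor to the three subcases of (3). In particular the delicate step you flag yourself, choosing the test function to be the eigenfunction for $r_1$ in case~(a) but for $r_2$ in case~(c) so that a negative numerator divided by a strictly \emph{smaller} denominator becomes more negative, is exactly the right device, and the direction of each inequality checks out.

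Two very small remarks, neither a genuine gap. First, for the borderline case~(b) the phrase ``invoking the previous step to exclude negative values'' is slightly loose; what you actually need is that $\sigma_1(p,q;1)=0$ forces $N[\psi]\ge 0$ for every admissible $\psi$, which immediately gives $\sigma_1(p,q;r)\ge 0$, and then testing with the zero-eigenfunction gives $\sigma_1(p,q;r)\le 0$. Second, if ``increasing'' in (1)--(2) is meant strictly rather than as ``non-decreasing,'' you obtain strictness by the same test-function trick you use in (3): plug the strictly positive principal eigenfunction for the larger coefficient into the Rayleigh quotient for the smaller one, noting for (1) that $\nabla\phi\not\equiv 0$ because $\phi$ cannot be constant under Dirichlet boundary conditions.
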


\begin{Lemma}\label{le32}
The diffusive logistic equation (\ref{ys14}) admits a unique positive solution,
denoted by $\theta_{p,a,b}$, if and only if
\begin{equation*}
a>\sigma_1(p,0;1).
\end{equation*}
Moreover, the mapping
$$a\in(\sigma_1(p,0;1),\,\infty)
\rightarrow\theta_{p,a,b}\in C_0^2(\overline{\Omega})$$
is continuous and increasing. Furthermore, $\theta_{p,a,b}$ satisfies
\begin{equation*}
\frac{a-\sigma_1(p,0;1)}
{\|b\|_{C(\overline{\Omega})}\|\phi_a\|_{C(\overline{\Omega})}}
\phi_a(x)\leq \theta_{p,a,b}(x)\leq \frac{a}{b_0},\;\;x\in\Omega,
\end{equation*}
where $\phi_a$ is the positive eigenfunction associated to $\sigma_1(p,0;1)$.
\end{Lemma}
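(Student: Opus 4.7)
My plan is to derive the four assertions of Lemma~\ref{le32}---necessity of $a>\sigma_1(p,0;1)$, existence, uniqueness, and continuous monotone dependence on $a$ with the stated two-sided bounds---by combining the variational characterisation from Lemma~\ref{le31} with the classical sub/super-solution method.

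For \emph{necessity}, testing \eqref{ys14} against a positive solution $\phi$, integrating by parts, and applying the Rayleigh quotient from Lemma~\ref{le31} yields
\begin{equation*}
\sigma_1(p,0;1)\le\frac{\int_\Omega p|\nabla\phi|^2\,dx}{\int_\Omega\phi^2\,dx}=a-\frac{\int_\Omega b\phi^3\,dx}{\int_\Omega\phi^2\,dx}<a,
\end{equation*}
the strict inequality coming from $b\ge b_0>0$ and $\phi>0$. \emph{Uniqueness} among positive solutions follows from a Diaz--Saa / Picone-type argument applied to two positive solutions $\phi_1,\phi_2$: dividing the $\phi_i$-equation by $\phi_i$, subtracting, and testing against $\phi_1^2-\phi_2^2$ produces $\int_\Omega b(\phi_1-\phi_2)^2(\phi_1+\phi_2)\,dx\le 0$, which forces $\phi_1\equiv\phi_2$ since $b(\phi_1+\phi_2)>0$.

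For \emph{existence} together with the explicit bounds, I would exhibit the ordered sub-super pair $\underline\phi=\varepsilon\phi_a$ with
\begin{equation*}
\varepsilon=\frac{a-\sigma_1(p,0;1)}{\|b\|_{C(\overline\Omega)}\|\phi_a\|_{C(\overline\Omega)}},\qquad \overline\phi\equiv\frac{a}{b_0},
\end{equation*}
where $\phi_a$ is the positive eigenfunction associated to $\sigma_1(p,0;1)$. Using $-\mathrm{div}(p\nabla\phi_a)=\sigma_1(p,0;1)\phi_a$ to verify the sub-solution inequality and $b\overline\phi\ge a$ for the super-solution, both become immediate, and the ordering $\underline\phi\le\overline\phi$ is automatic from the choice of $\varepsilon$. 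The standard monotone iteration then produces $\theta_{p,a,b}$ with $\underline\phi\le\theta_{p,a,b}\le\overline\phi$, which is exactly the pointwise inequality claimed in the lemma.

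For \emph{monotonicity in $a$}, if $a_1<a_2$ then $\theta_{p,a_1,b}$ is itself a sub-solution of the $a_2$-problem while a sufficiently large constant serves as super-solution; uniqueness forces $\theta_{p,a_1,b}\le\theta_{p,a_2,b}$, strict in $\Omega$ by the strong maximum principle applied to the difference. \emph{Continuity} I would obtain from the implicit function theorem applied to $F(a,\phi):=-\mathrm{div}(p\nabla\phi)-a\phi+b\phi^2$ acting between $\mathbb{R}\times C_0^{2,\alpha}(\overline\Omega)$ and $C^\alpha(\overline\Omega)$; invertibility of the Fréchet derivative at $(a_0,\theta_{p,a_0,b})$ reduces to showing $\sigma_1(p,-a_0+2b\theta_{p,a_0,b};1)>0$, which follows because $\theta_{p,a_0,b}$ itself witnesses $\sigma_1(p,-a_0+b\theta_{p,a_0,b};1)=0$ while Lemma~\ref{le31}(2) provides strict monotonicity in $q$. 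This last eigenvalue strict-positivity step is the main technical subtlety; everything else reduces to routine bookkeeping with sub/super-solutions.
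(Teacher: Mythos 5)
The paper does not prove this lemma itself; it simply cites it as \cite[Lemma 2.3]{cms2018}, so there is no in-text proof to compare against. Your self-contained argument is correct and follows the standard route for the weighted diffusive logistic equation: the Rayleigh quotient from Lemma~\ref{le31} yields necessity; the Picone/Diaz--Saa inequality yields uniqueness; the ordered pair $\underline{\phi}=\varepsilon\phi_a$ (with $\varepsilon=(a-\sigma_1(p,0;1))/(\|b\|_{C(\overline\Omega)}\|\phi_a\|_{C(\overline\Omega)})$) and $\overline{\phi}\equiv a/b_0$ produces existence together with exactly the two-sided bound stated in the lemma; comparison plus uniqueness gives monotonicity; and the implicit function theorem between $C_0^{2,\alpha}$ and $C^\alpha$ --- with invertibility secured by $\sigma_1(p,-a_0+b\theta_{p,a_0,b};1)=0$ and the strict increase of $\sigma_1$ in the potential from Lemma~\ref{le31}(2) --- gives continuity (indeed smoothness) into $C_0^2(\overline\Omega)$. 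One small point you should make explicit is that in the Diaz--Saa step the test function $\phi_1^2/\phi_2-\phi_2$ (and its symmetric companion) actually belongs to $H_0^1(\Omega)$: this uses the Hopf boundary lemma to ensure the two positive solutions have comparable normal decay at $\partial\Omega$. With that caveat filled in, the proof is sound and reflects the content of the cited reference.
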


\subsection{Proof of Theorem \ref{th11}(1)}

Let $(u,v)$ be any positive solution of \eqref{ys12}. In view of (\ref{ys21}),
$w=e^{-g(v)}u$ satisfies
\begin{equation*}
\left\{\begin{array}{ll}
-{\rm div}\left(d(v)e^{g(v)}\nabla w\right)+e^{g(v)}(u-\gamma F(v))w
=\lambda e^{g(v)}w, \;\;& x\in\Omega,\\[1mm]
w=0, & x\in\partial\Omega.
\end{array}\right.
\end{equation*}
As $w>0$ in $\Omega$, it follows that
\begin{equation*}
\lambda=\sigma_1\left(d(v)e^{g(v)},\,e^{g(v)}(u-\gamma F(v));\,e^{g(v)}\right).
\end{equation*}
We further derive from Lemma \ref{le21} and Lemma \ref{le31} that
\begin{equation*}
\lambda>\sigma_1\left(d(\mu),\,-\gamma e^{g(\mu)}\max_{[0,\,\mu]}F(v);\,e^{g(v)}\right).
\end{equation*}
According to the sign of $\sigma_1(d(\mu),\;-\gamma
e^{g(\mu)}\max_{[0,\,\mu]}F(v);\,1)$, our applying Lemma \ref{le31} gives
\begin{equation*}
\lambda>\left\{\begin{array}{ll}
\sigma_1(d(\mu),\,-\gamma e^{g(\mu)}\max_{[0,\,\mu]}F(v);\,
e^{g(\mu)})\;\;  &  {\rm if}~~\sigma_1(d(\mu),\;-\gamma
e^{g(\mu)}\max_{[0,\,\mu]}F(v);\,1)>0,\\[1mm]
0  &  {\rm if}~~\sigma_1(d(\mu),\;-\gamma
e^{g(\mu)}\max_{[0,\,\mu]}F(v);\,1)=0,\\[1mm]
\sigma_1(d(\mu),\,-\gamma e^{g(\mu)}\max_{[0,\,\mu]}F(v);\,1)\;\;
& {\rm if}~~\sigma_1(d(\mu),\;-\gamma e^{g(\mu)}\max_{[0,\,\mu]}F(v);\,1)<0.
\end{array}\right.
\end{equation*}
Therefore, there exists a positive constant $\underline{\lambda}=\underline{\lambda}(\mu)$ such that
\eqref{ys12} has a positive solution $(u,v)$ only if $\lambda>\underline{\lambda}$. The proof of Theorem \ref{th11}(1) is complete.

\subsection{Proof of Theorem \ref{th11}(2)}

Assume on the contrary that \eqref{ys12} has a positive solution $(u_\lm,v_\lm)$
for large $\lambda>0$. Note that $1\leq e^{g(v_\lm)}\leq e^{g(\mu)}$. Then we
derive from (\ref{ys21}) that
\begin{equation*}
-{\rm div}\left(d(v_\lm)e^{g(v_\lm)}\nabla w\right)
=\lambda e^{g(v_\lm)}w-e^{g(v_\lm)}w\left(e^{g(v_\lm)}w-\gamma F(v_\lm)\right)
\geq \lambda w-e^{2g(\mu)}w^2,\,\,x\in\Omega.
\end{equation*}
This shows that $w$ is a supersolution of
\begin{equation}\label{ys31}
\left\{\begin{array}{ll}
-{\rm div}\left(f(v_\lm)\nabla z\right)=\lambda z-e^{2g(\mu)}z^2,\;\;
&   x\in\Omega,\\[1mm]
z=0, & x\in\partial\Omega,
\end{array}\right.
\end{equation}
where
 \[f(v_\lm)=d(v_\lm)e^{g(v_\lm)}.\]
When $\lambda>\sigma_1(f(v_\lm),\,0;\,1)$ (this is possible as
$\sigma_1(f(v_\lm),\,0;\,1)\le \sigma_1 (d(0)e^{g(\mu)},\,0;\,1)$), Lemma
\ref{le32} ensures that (\ref{ys31}) has a unique positive solution
$\theta_{f(v_\lm),\,\lambda,\,e^{2g(\mu)}}$, and
\begin{equation*}
\theta_{f(v_\lm),\,\lambda,\,e^{2g(\mu)}}(x)\geq\frac{\lambda-
\sigma_1(f(v_\lm),\,0;\,1)}{e^{2g(\mu)}
\|\phi_{\lambda}\|_{C(\overline{\Omega})}}\phi_{\lambda}(x),\;\;x\in\Omega.
\end{equation*}
Here $\phi_{\lambda}$ with $\|\phi_{\lambda}\|_{L^2(\Omega)}=1$ stands for
the positive eigenfunction associated to $\sigma_1(f(v_\lm),\,0;\,1)$.
It follows from the standard sub-supersolution arguments that
\begin{equation*}
w(x)\geq \theta_{f(v_\lm),\,\lambda,\,e^{2g(\mu)}}(x)\geq\frac{\lambda-
\sigma_1(f(v_\lm),\,0;\,1)}{e^{2g(\mu)}
\|\phi_{\lambda}\|_{C(\overline{\Omega})}}\phi_{\lambda}(x),\;\;x\in\Omega.
\end{equation*}
In view of Lemma \ref{le21} and Lemma \ref{le31},
\begin{equation*}
\sigma_1(f(v_\lm),\,0;\,1)\leq d(0)e^{g(\mu)}
\sigma_1:=s(\mu).
\end{equation*}
Let
  $$\tau(\lambda):=\frac{\lambda-s(\mu)}{e^{2g(\mu)}\|\phi_{\lambda}
 \|_{C(\overline{\Omega})}}.$$
Then
 \begin{equation*}
u_\lm(x)\geq e^{-g(v_\lm)}u_\lm(x)= w(x)\geq \tau(\lambda)\phi_{\lambda}(x),\;\;x\in\Omega.
\end{equation*}
Since $\|\phi_{\lambda}\|_{C(\overline{\Omega})}$ is
bound in $\lambda$ (see, e.g., \cite[Theorem 4.1]{sd1965}), it follows that
\begin{equation*}
\tau(\lambda)\geq\frac{\lambda-s(\mu)}{Ce^{2g(\mu)}}\rightarrow\infty
~~{\rm as}~~\lambda\rightarrow\infty.
\end{equation*}
Together with Lemma \ref{le31}, we derive from the second equation of
\eqref{ys12} that
$$\mu=\sigma_1(D,\,v_\lm+u_\lm\mathbb{F}(v_\lm);\,1)\geq
\sigma_1\left(D,\,\tau(\lambda)\phi_{\lambda}\min_{[0,\,\mu]}
\mathbb{F}(v_\lm);\,1\right):=h(\lambda).$$

Assume that $\min_{[0,\,\mu]}\mathbb{F}(v_\lm)>0$. We shall show
\begin{equation}
\lim\limits_{\lambda\rightarrow+\infty}h(\lambda)=+\infty.
 \label{3.2a}\end{equation}
Once this is done, then the conclusion (2) is deduced as $\mu>D\sigma_1$ is fixed. In the following we prove \eqref{3.2a}. Making use of Lemma \ref{le21} we have
\begin{equation*}
h(\lambda)=\inf_{\varphi\in H^1_0(\Omega)\backslash \{0\}}
\frac{\int_{\Omega}D|\nabla\varphi|^2dx+\tau(\lambda)\min_{[0,\,\mu]}
\mathbb{F}(v_\lm)\int_{\Omega}\phi_{\lambda}\varphi^2dx}{\int_{\Omega}
\varphi^2dx}.
\end{equation*}
Assume on the contrary that $h(\lambda)$ is bounded. Then we can find a sequence
$\varphi_{\lambda}\in H^1_0(\Omega)$ with
$\|\varphi_{\lambda}\|_{L^2(\Omega)}=1$ such that the infimum of $h(\lambda)$
is attained at $\varphi_{\lambda}$, that is,
\begin{equation}\label{ys32}
h(\lambda)=\int_{\Omega}D|\nabla\varphi_{\lambda}|^2dx+
\tau(\lambda)\min_{[0,\,\mu]}\mathbb{F}(v_\lm)\int_{\Omega}
\phi_{\lambda}\varphi^2_{\lambda}dx \ge\int_{\Omega}D|\nabla\varphi_{\lambda}|^2dx.
\end{equation}
The boundedness of $h(\lambda)$ shows that
$\varphi_{\lambda}$ is bounded in $H_0^1(\Omega)$. Therefore, there
exists some nonnegative function $\varphi_0\geq0~(\not \equiv 0)$ with
$\|\varphi_0\|_{L^2(\Omega)}=1$ such that
\begin{equation}\label{ys33}
\left\{\begin{array}{ll}
\varphi_{\lambda}\rightarrow\varphi_0   & {\rm weakly~in}~~
H^1_0(\Omega) \\[1mm]
\varphi_{\lambda}\rightarrow\varphi_0   & {\rm in}~~L^2(\Omega)
\end{array}\right.~~~{\rm as}~~~\lambda\rightarrow\infty,
\end{equation}
up to a subsequence if necessary. In order to take the limit on both sides
of (\ref{ys32}), we need to discuss the limit of $\phi_\lambda$ as
$\lambda\rightarrow\infty$. Since $d(\mu)\leq f(v_\lm)\leq d(0)e^{g(\mu)}$,
it follows from the monotonicity properties of the principal eigenvalue (see
Lemma \ref{le31}) that
\begin{equation*}
d(\mu)\sigma_1\leq\sigma_1(f(v_\lm),0;1)
\leq d(0)e^{g(\mu)}\sigma_1.
\end{equation*}
Thus, there exists some number
$\sigma_0\in[d(\mu)\sigma_1,d(0)e^{g(\mu)}\sigma_1]$
such that
\begin{equation*}
\sigma_1(f(v_\lm),0;1)\rightarrow\sigma_0~~
{\rm as}~~\lambda\rightarrow\infty,
\end{equation*}
up to a subsequence if necessary. Notice that $\phi_{\lambda}$ satisfies $\|\phi_{\lambda}\|_{L^2(\Omega)}=1$ and
\begin{equation}\label{ys34}
\left\{\begin{array}{ll}
-{\rm div}\left(f(v_\lm)\nabla\phi_{\lambda}
\right)=\sigma_1(f(v_\lm),0;1)\phi_{\lambda},\;\; & x \in \Omega,\\[1mm]
\phi_{\lambda}=0, & x \in \partial\Omega.
\end{array}\right.
\end{equation}
Our multiplying both sides of (\ref{ys34}) by $\phi_\lambda$ and integrating
over $\Omega$ gives
\begin{equation*}
d(\mu)\int_{\Omega}|\nabla \phi_{\lambda}|^2dx
\leq \int_{\Omega}f(v_\lm)|\nabla \phi_{\lambda}|^2dx
=\sigma_1(f(v_\lm),0;1)\int_{\Omega}\phi_{\lambda}^2dx
\leq d(0)e^{g(\mu)}\sigma_1.
\end{equation*}
Thus, $\phi_{\lambda}$ is bounded in $H_0^1(\Omega)$, and so there
exists some nonnegative function $\phi_0\geq0~(\not \equiv 0)$ with
$\|\phi_0\|_{L^2(\Omega)}=1$ such that
\begin{equation}\label{ys35}
\left\{\begin{array}{ll}
\phi_{\lambda}\rightarrow \phi_0\; \; & {\rm weakly~in}~~
H^1_0(\Omega) \\[1mm]
\phi_{\lambda}\rightarrow\phi_0 \; & {\rm in}~~L^2(\Omega)
\end{array}\right.~~~{\rm as}~~~\lambda\rightarrow\infty.
\end{equation}
Noticing that $a_\lambda(x,\xi)=f(v_\lm(x))\xi$ satisfies all assumptions of
\cite[Theorem 2.2]{bcr2006} (see \cite[Example 1.1]{bcr2006}). It follows that there
exists a uniformly elliptic symmetric matrix $A_{\rm hom}\in(L^{\infty}(\Omega))^{n\times n}$
such that the operators $-{\rm div}(f(v_\lm)\nabla u)$ $G$-converge to (up to a subsequence) the operator
$-{\rm div}(A_{\rm hom}\nabla u)$ as $\lm\to\infty$ (see, e.g., \cite[Remark 2.5]{bcr2006}
or \cite[Theorem 4.1]{CMD90}).
Consequently, we can apply the homogenization technique (see,
e.g.,  \cite[Theorem 2.2]{bcr2006} or \cite[Theorem 2.1]{k1979}) to conclude
that $(A_{\rm hom}, \sigma_0, \phi_0)$ satisfies
\begin{equation*}
-{\rm div}(A_{\rm hom}\nabla\phi_0)=\sigma_0\phi_0.
\end{equation*}
According to the strong maximum principle, we have
$\phi_0>0$ in $\Omega$ since $\sigma_0\phi_0\geq 0~(\not\equiv 0)$.
Our setting $\lambda\rightarrow\infty$ in (\ref{ys32}) yields
$$\limsup_{\lambda\rightarrow\infty} \int_{\Omega}\phi_{\lambda}
\varphi_{\lambda}^2dx=0$$
since $\tau(\lambda)\rightarrow\infty$ as $\lambda\rightarrow\infty$ and
$\min_{[0,\,\mu]}\mathbb{F}(v_\lm)>0$. On the other hand, it follows from (\ref{ys33})
and (\ref{ys35}) that
\begin{equation*}
\limsup\limits_{\lambda\rightarrow\infty}
\int_{\Omega}\phi_{\lambda}\varphi_{\lambda}^2dx
=\int_{\Omega}\phi_0\varphi_0^2dx>0,
\end{equation*}
a contradiction. The proof of Theorem \ref{th11}(2) is complete.

\subsection{Proof of Theorem \ref{th11}(3)}
%The purpose of this subsection is to investigate the bifurcation structure
%of the set of positive solutions of \eqref{ys12} in the case that $F'(0)=0$
%and complete the proof of Theorem \ref{th11}(3).
%Ecologically, Theorem \ref{th11}(3) shows that the prey can coexist with
%the predator even though its growth rate is too high, provided that the
%interaction between two species satisfies $\min_{[0,\,\mu]}\mathbb{F}(v)>0$.

As the functional framework in the bifurcation analysis for system \eqref{ys12},
we employ the Banach spaces
$$X:=W^{2,p}(\Omega)\cap W^{1,p}_0(\Omega)~~{\rm and}~~Y:=L^p(\Omega)~~
{\rm with}~~p>n.$$
Define the associated operator
${\mathscr L}:\mathbb{R}\times X\times X\mapsto Y\times Y$ by
%\begin{equation*}
%{\mathscr L}:\mathbb{R}\times X\times X\mapsto Y\times Y
%\end{equation*}
%given by
\begin{equation*}
{\mathscr L}{(\lambda,u,v)}=\left(\begin{matrix}
-{\rm div}\left(d(v)\nabla u-u\chi(v)\nabla v\right)-\lambda u+u^2
-\gamma u F(v)  \\[1mm]
-D \Delta v-\mu v+v^2+u F(v)
\end{matrix}\right).
\end{equation*}
This means that $(u,v)\in X\times X$ is a nonnegative solution of system
\eqref{ys12} if and only if ${\mathscr L}(\lambda,u,v)=0$.

Taking $\lambda$ as the bifurcation parameter, we apply the well-known local
bifurcation theorem of Crandall and Rabinowitz from a simple eigenvalue (see
\cite[Theorem 1.7]{cr1971}) to find a bifurcation point on
$\Gamma_{v}=\{(\lambda,0,\omega_\mu): \lambda\in \mathbb{R}\}$ with $\mu>D \sigma_1$
where positive solutions of \eqref{ys12} emanate. More precisely, the following local bifurcation properties hold.

\begin{Lemma}\label{le33}\, Let $\mu>D\sigma_1$, and let $\lambda_\mu$ and $(\phi_\mu, \psi_\mu)$ be given by \eqref{1.8a} and \eqref{1.10}, respectively. Then positive solutions of \eqref{ys12} emanate from $\Gamma_{v}$ if and only if
$\lambda=\lambda_\mu$. To be precise, there is a small neighborhood
$\mathcal{N}_1$ of $(\lambda,u,v)=(\lambda_\mu,0,\omega_\mu)$
in $\mathbb{R}\times X\times X$ such that
${\mathscr L}^{-1}(0)\cap \mathcal{N}_1$ consists of the union of
$\Gamma_{v}\cap \mathcal{N}_1$ and the local curve
\begin{equation}
(\lambda(s),u(s),v(s))=(\lambda_\mu+\lambda_\mu(s),
s(\phi_\mu+\phi(s)),\omega_\mu+
s(\psi_\mu+\psi(s)))
  \label{3.7}\end{equation}
for $s\in(-\varepsilon,\varepsilon)$ with some $\varepsilon>0$, where
$(\lambda_\mu(s),\phi(s),\psi(s))\in \mathbb{R}\times X\times X$ is
continuously differentiable for $s\in(-\varepsilon,\varepsilon)$ and
satisfies $(\lambda_\mu(0),\phi(0),\psi(0))=(0,0,0)$, and
$\lambda_\mu^{\prime}(0)$ is given by \eqref{ys314}.
Therefore, positive solutions contained in
${\mathscr L}^{-1}(0)\cap \mathcal{N}_1$ can be expressed as
\begin{equation}
\mathcal{S}_1=\{(\lambda_\mu+\lambda_\mu(s),
s(\phi_\mu+\phi(s)),\omega_\mu+
s(\psi_\mu+\psi(s))): s\in(0,\varepsilon)\}.
\label{3.8}\end{equation}
\end{Lemma}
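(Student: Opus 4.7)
The plan is to apply the classical Crandall--Rabinowitz local bifurcation theorem from a simple eigenvalue \cite[Theorem 1.7]{cr1971} with $\lambda$ as the bifurcation parameter and $(\lambda,u,v)=(\lambda_\mu,0,\omega_\mu)$ as the base point. First I would compute the Fr\'echet derivative $\mathscr{L}_{(u,v)}(\lambda,0,\omega_\mu)[\phi,\psi]$. Because $u\equiv 0$ at the base point, every term of $\mathscr{L}$ carrying $u$ or $\nabla u$ as a factor disappears after taking $\partial_v$, and a direct differentiation gives the linearized system
\begin{equation*}
\left\{\begin{array}{l}
-{\rm div}\bigl(d(\omega_\mu)\nabla\phi-\phi\chi(\omega_\mu)\nabla\omega_\mu\bigr)-\lambda\phi-\gamma F(\omega_\mu)\phi=0,\\[1mm]
-D\Delta\psi+(2\omega_\mu-\mu)\psi+F(\omega_\mu)\phi=0,
\end{array}\right.
\end{equation*}
with homogeneous Dirichlet data. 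The substitution $\phi=e^{g(\omega_\mu)}\Phi$ cancels the drift term and converts the first equation into the formally self-adjoint eigenvalue problem \qq{1.9}; the operator $-D\Delta+2\omega_\mu-\mu$ appearing in the second equation has a strictly positive principal eigenvalue (since $\omega_\mu$ solves \qq{ys16}, so $0$ is the principal eigenvalue of $-D\Delta+\omega_\mu-\mu$, and Lemma \ref{le31} yields strict positivity after the shift by $\omega_\mu$), hence is invertible on the Dirichlet space. Consequently a nontrivial kernel element at $\lambda=\lambda_\mu$ is forced to be a scalar multiple of $(\phi_\mu,\psi_\mu)$, and the ``only if'' direction of the lemma follows from the uniqueness of the principal eigenvalue of \qq{1.9} as the sole $\lambda$ admitting a positive kernel element.

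Next I would verify the three hypotheses of \cite[Theorem 1.7]{cr1971}: (i) ${\rm Ker}\,\mathscr{L}_{(u,v)}(\lambda_\mu,0,\omega_\mu)={\rm span}\{(\phi_\mu,\psi_\mu)\}$, which is immediate from the preceding paragraph; (ii) the range has codimension one; and (iii) the transversality condition holds. For (ii), the linearization has a lower-triangular block structure in $(\phi,\psi)$ coming from $\partial_v\mathscr{L}_1|_{(0,\omega_\mu)}=0$, so the Fredholm alternative reduces to solvability of the first linearized equation. After the weighting $\phi=e^{g(\omega_\mu)}\Phi$ this first operator becomes formally self-adjoint and annihilates $\Phi_\mu$, so $(f,g)$ lies in the range iff $\int_\Omega f\,\Phi_\mu\,dx=0$. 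For (iii), one computes $\mathscr{L}_{\lambda,(u,v)}(\lambda_\mu,0,\omega_\mu)[\phi_\mu,\psi_\mu]=(-\phi_\mu,0)$, and the pairing
\begin{equation*}
\int_\Omega(-\phi_\mu)\Phi_\mu\,dx=-\int_\Omega e^{g(\omega_\mu)}\Phi_\mu^2\,dx\neq 0
\end{equation*}
shows this pair is not in the range. The Crandall--Rabinowitz theorem then delivers the local parametrization \qq{3.7}, and since $\phi_\mu>0$ in $\Omega$, the associated solutions in \qq{3.8} are positive after possibly shrinking $\varepsilon$.

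Finally, for the expression \qq{ys314} for $\lambda'(0)$, I would expand the first equation of $\mathscr{L}(\lambda(s),u(s),v(s))=0$ to second order in $s$. The $O(s)$ terms reproduce the linearized identity, while after dividing by $s^2$ and evaluating at $s=0$ the $O(s^2)$ terms yield the schematic equation
\begin{equation*}
A_1\phi'(0)+\phi_\mu^2-\lambda'(0)\phi_\mu-{\rm div}\bigl(d'(\omega_\mu)\psi_\mu\nabla\phi_\mu-\phi_\mu\chi'(\omega_\mu)\psi_\mu\nabla\omega_\mu-\phi_\mu\chi(\omega_\mu)\nabla\psi_\mu\bigr)-\gamma F'(\omega_\mu)\psi_\mu\phi_\mu=0,
\end{equation*}
where $A_1$ denotes the linearized first-equation operator at $(\lambda_\mu,0,\omega_\mu)$. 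Testing this identity against $\Phi_\mu$ eliminates the $A_1\phi'(0)$ term by the Fredholm pairing and leaves an algebraic identity for $\lambda'(0)$. The main obstacle is the subsequent bookkeeping: after integrating the divergence term by parts and substituting $\phi_\mu=e^{g(\omega_\mu)}\Phi_\mu$ together with the identity $\nabla\phi_\mu=e^{g(\omega_\mu)}\nabla\Phi_\mu+\chi(\omega_\mu)d(\omega_\mu)^{-1}e^{g(\omega_\mu)}\Phi_\mu\nabla\omega_\mu$, the cross terms carrying $\nabla\omega_\mu\cdot\nabla\Phi_\mu$ must be rearranged through a further integration by parts invoking \qq{1.9} so that the factor $\lambda_\mu$ surfaces in front of one of the integrals. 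Once this reduction is carried out, the closed-form expression \qq{ys314} is read off directly.
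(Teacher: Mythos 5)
Your proposal follows the same Crandall--Rabinowitz route as the paper for the three bifurcation hypotheses: one-dimensional kernel via the weighting $\phi=e^{g(\omega_\mu)}\Phi$ together with invertibility of $-D\Delta+2\omega_\mu-\mu$ (and your justification of the latter via $\sigma_1(D,\omega_\mu-\mu;1)=0$ plus Lemma~\ref{le31} is exactly right), range characterized by the orthogonality condition $\int_\Omega f\Phi_\mu\,dx=0$ using the self-adjointness of the weighted first operator, and transversality through $\int_\Omega e^{g(\omega_\mu)}\Phi_\mu^2\,dx\neq0$. Where you depart from the paper is the bifurcation-direction computation. You expand the original quasilinear first equation to $O(s^2)$ and test against $\Phi_\mu$, relying on the fact (which you correctly exploit, and which follows from expanding \eqref{1.9}) that $\Phi_\mu$ spans the kernel of the formal adjoint of the non-self-adjoint linearization $A_1$. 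The paper instead substitutes $w(s)=e^{-g(v(s))}u(s)$ from the outset, which converts the $u$-equation into the self-adjoint divergence form \eqref{ys21}; multiplying that by $\Phi_\mu$ and subtracting the identity obtained by pairing \eqref{1.9} with $w(s)$ (their equation \eqref{ys312}) isolates the $\widetilde{A}$, $\widetilde{B}$, $\widetilde{C}$ corrections before one ever divides by $s^2$. The upshot is that the cross terms $\nabla\omega_\mu\cdot\nabla\Phi_\mu$ that you flag as ``the main obstacle'' never arise in the paper's route: they are absorbed into the single weight $e^{g(v(s))}$ at the start rather than being cancelled by a secondary integration by parts invoking \eqref{1.9}. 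Both routes are correct and arrive at \eqref{ys314}; the paper's change of variable simply makes the bookkeeping after dividing by $s^2$ substantially shorter, and in particular makes the appearance of the factor $\lambda_\mu$ in front of $\int\frac{\chi(\omega_\mu)}{d(\omega_\mu)}e^{g(\omega_\mu)}\psi_\mu\Phi_\mu^2\,dx$ come out automatically from \eqref{ys312} instead of from the rearrangement you allude to.
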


\begin{proof} For clarity, we divide the proof into the following four steps.

\textbf{Step 1:} We claim that the kernel $\mathrm{Ker}[{\mathscr L}_{(u,v)}
(\lambda_\mu,0,\omega_\mu)]$ is one-dimensional. By straightforward
calculations, the linearized operator of ${\mathscr L}(\lambda_\mu,u,v)$ around
the semitrivial solution $(0,\omega_\mu)$ is given by
\begin{equation*}
{\mathscr L}_{(u,v)}(\lambda_\mu,0,\omega_\mu)
=\left(\begin{matrix}
-{\rm div}\left(d(\omega_\mu)\nabla \cdot
-\chi(\omega_\mu)\nabla\omega_\mu\cdot\right)
-\lambda_\mu-\gamma F(\omega_\mu) & 0\\[1mm]
F(\omega_\mu) & -D\Delta-\mu+2\omega_\mu
\end{matrix}\right).
 \end{equation*}
Let $(\phi,\psi)\in\mathrm{Ker}[{\mathscr L}_{(u,v)}(\lambda_\mu,0, \omega_\mu)]$. Then
\begin{equation*}
\left\{\begin{array}{ll}
-{\rm div}\left(d(\omega_\mu)\nabla\phi-\chi(\omega_\mu)\nabla
\omega_\mu\phi\right)-\gamma F(\omega_\mu)\phi=\lambda_\mu\phi,
& x\in\Omega,\\[1mm]
-D\Delta\psi+(2\omega_\mu-\mu)\psi=-F(\omega_\mu)\phi,
& x\in\Omega,\\[1mm]
\phi=\psi=0, & x\in\partial\Omega.
\end{array}\right.
\end{equation*}
By setting $\Phi=e^{-g(\omega_\mu)}\phi$, where
$g(\omega_\mu)=\int_0^{\omega_\mu}\frac{\chi(\tau)}{d(\tau)}d\tau$,
we can reduce the above problem to
\begin{equation}\label{ys37}
\left\{\begin{array}{ll}
-{\rm div}\left(d(\omega_\mu)e^{g(\omega_\mu)}\nabla\Phi\right)
-\gamma F(\omega_\mu)e^{g(\omega_\mu)}\Phi=\lambda_\mu
e^{g(\omega_\mu)}\Phi, & x\in\Omega,\\[1mm]
-D\Delta\psi+(2\omega_\mu-\mu)\psi=-F(\omega_\mu)
e^{g(\omega_\mu)}\Phi, & x\in\Omega,\\[1mm]
\Phi=\psi=0, & x\in\partial\Omega.
\end{array}\right.
\end{equation}
From the equations of $\Phi$ we have $\Phi=c\Phi_\mu$ for some constant $c$ by \eqref{1.9}. Then $\phi=c\phi_\mu$ and $\psi=c\psi_\mu$ by \eqref{1.10}. Therefore, \begin{equation*}
\mathrm{Ker}[{\mathscr L}_{(u,v)}(\lambda_\mu,0,
\omega_\mu)]=\mathrm{span}\langle
(\phi_\mu,\psi_\mu)\rangle,
\end{equation*}
and thus $\mathrm{Ker}[{\mathscr L}_{(u,v)}(\lambda_\mu,0,\omega_\mu)]$
is one-dimensional.

\textbf{Step 2:} We claim that the codimension of $\mathrm{Range}[{\mathscr L}_{(u,v)}(\lambda_\mu,0,\omega_\mu)]$ is $1$. In fact, if
$$(w,z)\in \mathrm{Range}[{\mathscr L}_{(u,v)}(\lambda_\mu,0,
\omega_\mu)],$$
then there exists $(\phi,\psi)\in X\times X$ such that
\begin{equation}\label{ys38}
\left\{\begin{array}{ll}
-{\rm div}\left(d(\omega_\mu)\nabla\phi-\chi(\omega_\mu)\nabla
\omega_\mu\phi\right)-(\lambda_\mu+\gamma F(\omega_\mu))\phi=w,
& x\in\Omega, \\[1mm]
-D\Delta\psi+(2\omega_\mu-\mu)\psi+F(\omega_\mu)\phi=z,
& x\in\Omega, \\[1mm]
\phi=\psi=0,  &  x\in\partial\Omega.
\end{array}\right.
\end{equation}
Let $\Phi=e^{-g(\omega_\mu)}\phi$. Then $\Phi$ satisfies
\begin{equation}\label{ys38'}
\left\{\begin{array}{ll}
-{\rm div}\left(d(\omega_\mu)e^{g(\omega_\mu)}\nabla\Phi\right)
-(\lambda_\mu+\gamma F(\omega_\mu))e^{g(\omega_\mu)}\Phi=w,
&  x\in \Omega,  \\[1mm]
\Phi=0,  &  x\in\partial\Omega.
\end{array}\right.
\end{equation}
Since the operator
\begin{equation*}
-{\rm div}\left(d(\omega_\mu)e^{g(\omega_\mu)}\nabla\right)
-(\lambda_\mu+\gamma F(\omega_\mu))e^{g(\omega_\mu)}:X\mapsto Y
\end{equation*}
is self-adjoint, it follows from the Fredholm alternative theorem that
(\ref{ys38'}) has a solution if and only if
$$\int_{\Omega}w\Phi_\mu dx=0,$$
where $\Phi_\mu$ is the positive eigenfunction associated
to $\lambda_\mu$. In this situation, the second equation of (\ref{ys38}) admits the solution
\begin{equation*}
\psi=(-D\Delta+2\omega_\mu-\mu)^{-1}\left(z-F(\omega_\mu)
e^{g(\omega_\mu)}\Phi\right)
\end{equation*}
since the operator $-D\Delta+2\omega_\mu-\mu: X\mapsto Y$ is invertible.
This shows that
$$\mathrm{Range}[{\mathscr L}_{(u,v)}(\lambda_\mu,0,\omega_\mu)]=
\{\mathrm{span}\langle(\Phi_\mu,0)\rangle\}^{\perp},$$
and thus the codimension of $\mathrm{Range}[{\mathscr L}_{(u,v)}(\lambda_\mu,0,
\omega_\mu)]$ is $1$.

\textbf{Step 3:} We shall show that
\begin{equation}\label{ys39}
{\mathscr L}_{(u,v),\lambda}(\lambda_\mu,0,
\omega_\mu)\left(\begin{matrix}
\phi_\mu\\
\psi_\mu\end{matrix}\right)\not\in
\mathrm{Range}[{\mathscr L}_{(u,v)}(\lambda_\mu,
0,\omega_\mu)].
\end{equation}
The straightforward calculations yield
\begin{equation*}
{\mathscr L}_{(u,v),\lambda}(\lambda_\mu,0,\omega_\mu)
\left(\begin{matrix}\phi_\mu\\
\psi_\mu\end{matrix}\right)=\left(\begin{matrix}
-\phi_\mu\\
0 \end{matrix}\right).
\end{equation*}
If (\ref{ys39}) is false, then we can find a pair of functions
$(\phi,\psi)\in X\times X$ such that
\begin{equation*}
\left\{\begin{array}{ll}
-{\rm div}\left(d(\omega_\mu)\nabla\phi-\chi(\omega_\mu)\nabla
\omega_\mu\phi\right)-(\lambda_\mu+\gamma F(\omega_\mu))\phi=
-\phi_\mu,  &   x\in\Omega,  \\[1mm]
-D\Delta\psi+(2\omega_\mu-\mu)\psi+F(\omega_\mu)\phi=0,
&  x\in\Omega,  \\[1mm]
\phi=\psi=0,  &  x\in\partial\Omega.
\end{array}\right.
\end{equation*}
As above, by the Fredholm alternative theorem, we get
$$0=\int_{\Omega}\phi_\mu\Phi_\mu dx=
\int_{\Omega}e^{g(\omega_\mu)}\Phi^2_{\lambda_\mu}dx.$$
This is impossible since $\Phi_\mu>0$ in $\Omega$. Consequently,
all the assumptions for applying the local bifurcation theorem of Crandall
and Rabinowitz from a simple eigenvalue have been checked, and thus, the
local bifurcation curve $(\lambda(s),u(s),v(s))$ with the form \eqref{3.7} for $s\in(-\varepsilon,\varepsilon)$ is obtained.

\textbf{Step 4:} We calculate $\lambda_\mu^{\prime}(0)$.
By setting $w(s)=e^{-g(v(s))}u(s)$, we obtain from (\ref{ys21}) that $w(s)$
satisfies
\begin{equation*}
\left\{\begin{array}{ll}
-{\rm div}\left(d(v(s))e^{g(v(s))}\nabla w(s)\right)=e^{g(v(s))}w(s)
(\lambda(s)-u(s)+\gamma F(v(s))),\;\; & x\in\Omega,\\[1mm]
w(s)=0, & x\in\partial\Omega.
\end{array}\right.
\end{equation*}
Multiplying by $\Phi_\mu$ and integrating the result we have
\begin{equation*}
\int_{\Omega}d(v(s))e^{g(v(s))}\nabla w(s)\cdot\nabla \Phi_\mu dx
=\int_{\Omega}e^{g(v(s))}w(s)(\lambda(s)-u(s)+\gamma F(v(s)))
\Phi_\mu dx.
\end{equation*}
Denote
\begin{equation*}
\left\{\begin{aligned}
&A(s):=e^{g(v(s))}=e^{g(\omega_\mu)}+\widetilde{A}(s), \\
&B(s):=\gamma F(v(s))=\gamma F(\omega_\mu)+\widetilde{B}(s),\\
&C(s):=d(v(s))e^{g(v(s))}=d(\omega_\mu)e^{g(\omega_\mu)}
+\widetilde{C}(s).
\end{aligned}\right.
\end{equation*}
Then
\begin{equation*}
\lim\limits_{s\rightarrow 0}\frac{\widetilde{A}(s)}{s}=A^{\prime}(0),
~~~\lim\limits_{s\rightarrow 0}\frac{\widetilde{B}(s)}{s}=B^{\prime}(0),
~~~\lim\limits_{s\rightarrow 0}\frac{\widetilde{C}(s)}{s}=C^{\prime}(0).
\end{equation*}
After some rearrangement, we obtain
\bes\label{ys311}
&&\int_{\Omega}\left(d(\omega_\mu)e^{g(\omega_\mu)}
+\widetilde{C}(s)\right)\nabla w(s)\cdot\nabla\Phi_\mu dx\nonumber\\
&=&\int_{\Omega}\left(e^{g(\omega_\mu)}+\widetilde{A}(s)\right)
w(s)(\lambda_\mu+\lambda_\mu(s))\Phi_\mu dx\nonumber\\
&&+\int_{\Omega}\left(e^{g(\omega_\mu)}+\widetilde{A}(s)\right)
w(s)\left(-u(s)+\gamma F(\omega_\mu)+\widetilde{B}(s)\right)
\Phi_\mu dx.
\ees
Multiplying (\ref{1.9}) by $w(s)$ and integrating the result we obtain
\begin{equation}\label{ys312}
\int_{\Omega} d(\omega_\mu)e^{g(\omega_\mu)}
\nabla\Phi_\mu\cdot\nabla w(s)dx=
\int_{\Omega}\left(\gamma F(\omega_\mu)+\lambda_\mu\right)
e^{g(\omega_\mu)}\Phi_\mu w(s)dx.
\end{equation}
It follows from (\ref{ys311}) and (\ref{ys312}) that
\begin{equation*}
{\begin{aligned}
   \int_{\Omega}\widetilde{C}(s)\nabla w(s)\cdot\nabla\Phi_\mu dx
=& \int_{\Omega}e^{g(\omega_\mu)}w(s)\left(\lambda_\mu(s)-u(s)+
   \widetilde{B}(s)\right)\Phi_\mu dx  \\
 & ~~+\int_{\Omega}\widetilde{A}(s)w(s)\left(\lambda_\mu+\lambda_\mu(s)
   -u(s)+\gamma F(\omega_\mu)+\widetilde{B}(s)\right)
   \Phi_\mu dx.\end{aligned}}
\end{equation*}
Dividing the above equation by $s^2$ firstly,
and letting $s\rightarrow 0^{+}$ and using \eqref{3.7} secondly, we have
\begin{equation}\label{ys313}
{\begin{aligned}
   \int_{\Omega}C^{\prime}(0)\left|\nabla\Phi_\mu\right|^2dx
=& \int_{\Omega}e^{g(\omega_\mu)}\left(\lambda_\mu^{\prime}(0)
   -e^{g(\omega_\mu)}\Phi_\mu+B^{\prime}(0)\right)
   \Phi_\mu^2dx  \\
 & ~~+\int_{\Omega}A^{\prime}(0)\left(\lambda_\mu+\gamma F(\omega_\mu)
   \right)\Phi_\mu^2dx.
\end{aligned}}
 \end{equation}
Making use of the expression $g(v)=\int_0^v\frac{\chi(\tau)}{d(\tau)}d\tau$ and \eqref{3.7}, we can deduce by straightforward calculations that
\begin{equation}\label{ys313'}
\left\{\begin{aligned}
&A^{\prime}(0)=\frac{\chi(\omega_\mu)}{d(\omega_\mu)}
e^{g(\omega_\mu)}\psi_\mu,\\
&B^{\prime}(0)=\gamma F^{\prime}(\omega_\mu)\psi_\mu,\\
&C^{\prime}(0)=\left(d^{\prime}(\omega_\mu)+\chi(\omega_\mu)
\right)e^{g(\omega_\mu)}\psi_\mu.
\end{aligned}\right.
\end{equation}
Substitute (\ref{ys313'}) into (\ref{ys313}) to derive
(\ref{ys314}). The proof of Lemma \ref{le33} is complete.
\end{proof}

\begin{Remark}
Lemma \ref{le33} still holds in the case that $F'(0)>0$.
\end{Remark}

In the following we investigate the stabilities of $(\theta_\lambda,0)$ and $(0,\omega_\mu)$. By direct calculations, the linearized operator of
${\mathscr L}(\lambda,u,v)$ around the semitrivial solution
$(\theta_\lambda,0)$ reads as
\begin{equation*}
{\mathscr L}_{(u,v)}(\lambda,\theta_\lambda,0)=\left(\begin{matrix}
-d(0)\Delta-\lambda+2\theta_\lambda\;\; &\;\;
-{\rm div}\left(d^{\prime}(0)\nabla\theta_\lambda\cdot-
\theta_\lambda\chi(0)\nabla\cdot\right)-\gamma\theta_\lambda
F'(0) \\[1mm]
0 & -D\Delta-\mu+\theta_\lambda F'(0)
\end{matrix}\right).
\end{equation*}
By the well-known Riesz-Schauder theory (see, e.g., \cite[Lemma 3.5]{h1984}), the spectrum
$\rho({\mathscr L}_{(u,v)}(\lambda,\theta_\lambda,0))$ of
${\mathscr L}_{(u,v)}(\lambda,\theta_\lambda,0)$ only consists of real eigenvalues and
\begin{equation*}
\rho({\mathscr L}_{(u,v)}(\lambda,\theta_\lambda,0))
=\rho(-d(0)\Delta-\lambda+2\theta_\lambda)\cup
\rho(-D\Delta-\mu+\theta_\lambda F'(0)).
\end{equation*}
As $\sigma_1(d(0),\,-\lambda+2\theta_{d(0),
\lambda};1)>0$, all eigenvalues of $-d(0)\Delta-\lambda+2\theta_\lambda$ are positive.
Taking advantage of the linearization principle for quasilinear evolution equations
\cite{p1981} we have the following conclusions:

(i)\; If $F'(0)=0$, then the first eigenvalue of $-D\Delta-\mu$ is $D\sigma_1-\mu$, which is negative since $\mu>D\sigma_1$. Thus, $(\theta_\lambda,0)$ is unstable.

(ii)\, If $F'(0)>0$, then the first eigenvalue of $-D\Delta-\mu+\theta_\lambda F'(0)$ is $\mu_\lambda-\mu$, where $\mu_\lambda=\sigma_1(D,\,\theta_\lambda F'(0);\,1)$. Therefore,
$(\theta_\lambda,0)$ is unstable when $\mu>\mu_\lambda$ and it is
asymptotically stable when $\mu<\mu_\lambda$.
According to Lemma \ref{le31} and Lemma \ref{le32}, $\mu_{\lambda}$ is a continuous and increasing function with respect to $\lambda$. By virtue of \cite[Proposition 1.2]{y2008}, we see that $\lim_{\lambda\rightarrow d(0)\sigma_1} \theta_\lambda=0$ uniformly in $\overline{\Omega}$ and
$\lim_{\lambda\rightarrow\infty}\theta_\lambda(x)=\infty$ for each
$x\in\Omega$. Together with \cite[Proposition 1.1]{y2008}, we can obtain
that $\lim_{\lambda\rightarrow d(0)\sigma_1}\mu_{\lambda}=
D\sigma_1$ and $\lim_{\lambda\rightarrow\infty}\mu_{\lambda}
=\infty$. Owing to implicit function theorem, there exists a unique number
$\lambda^*=\lambda^*(\mu)\in(d(0)\sigma_1,\infty)$ such
that $\mu=\mu_{\lambda^*}$ for any given $\mu>D\sigma_1$.

Therefore, the stability of $(\theta_\lambda,0)$ is stated as follows.

\begin{Lemma}\label{le34}
Assume that $\mu>D\sigma_1$ is fixed. If $F'(0)=0$, then the
semitrivial solution $(\theta_\lambda,0)$ is unstable for
$\lambda>d(0)\sigma_1$. If $F'(0)>0$, then the semitrivial
solution $(\theta_\lambda,0)$ is asymptotically stable for
$\lambda>\lambda^*$, while it is unstable for $\lambda<\lambda^*$.
\end{Lemma}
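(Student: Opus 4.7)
The plan is to analyze the spectrum of the linearized operator $\mathscr{L}_{(u,v)}(\lambda,\theta_\lambda,0)$ displayed above the lemma and then invoke the principle of linearized stability for quasilinear evolution equations. Since that operator is upper triangular (the $(2,1)$-entry vanishes because $uF(v)$ has zero derivative with respect to $u$ at $v=0$), by Riesz--Schauder theory its spectrum decomposes as a disjoint union
\begin{equation*}
\rho(\mathscr{L}_{(u,v)}(\lambda,\theta_\lambda,0)) = \rho(-d(0)\Delta-\lambda+2\theta_\lambda)\cup\rho(-D\Delta-\mu+\theta_\lambda F'(0)),
\end{equation*}
so I can handle the two diagonal blocks independently.

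For the first block, the principal eigenvalue is $\sigma_1(d(0),-\lambda+2\theta_\lambda;1)$. I would recall the standard fact that, because $\theta_\lambda$ solves the scalar logistic equation \eqref{1.6} and so $\sigma_1(d(0),-\lambda+\theta_\lambda;1)=0$, the strict monotonicity of the principal eigenvalue with respect to the zeroth-order coefficient (Lemma \ref{le31}(2)) immediately forces $\sigma_1(d(0),-\lambda+2\theta_\lambda;1)>0$. Hence this block contributes only positive spectrum and plays no role in determining stability.

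The stability is therefore controlled entirely by the second block. First I would settle the case $F'(0)=0$: the block reduces to $-D\Delta-\mu$, whose principal eigenvalue is $D\sigma_1-\mu<0$ by hypothesis, so a negative eigenvalue of $\mathscr{L}_{(u,v)}(\lambda,\theta_\lambda,0)$ exists and the linearized stability principle yields instability. Next, for $F'(0)>0$, the principal eigenvalue of the block equals $\mu_\lambda-\mu$ with $\mu_\lambda=\sigma_1(D,\theta_\lambda F'(0);1)$. The core task is to locate the sign of this quantity as $\lambda$ varies. I would note that Lemma \ref{le31}(2), combined with the strict monotonicity $\lambda\mapsto\theta_\lambda$ from Lemma \ref{le32}, shows $\lambda\mapsto\mu_\lambda$ is continuous and strictly increasing, with limits $D\sigma_1$ and $\infty$ as $\lambda\to d(0)\sigma_1$ and $\lambda\to\infty$ respectively (via the asymptotic behavior of $\theta_\lambda$ from \cite{y2008}). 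Hence a unique $\lambda^*(\mu)>d(0)\sigma_1$ satisfies $\mu=\mu_{\lambda^*}$, and $\mu_\lambda-\mu>0$ for $\lambda>\lambda^*$, $\mu_\lambda-\mu<0$ for $\lambda<\lambda^*$. Combined with the positivity of the first block, this yields asymptotic stability of $(\theta_\lambda,0)$ for $\lambda>\lambda^*$ and instability for $\lambda<\lambda^*$.

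The main technical obstacle I anticipate is justifying the application of the abstract linearized stability principle of \cite{p1981} to the quasilinear parabolic system \eqref{ys11}, because the cross-diffusion term $-\mathrm{div}(u\chi(v)\nabla v)$ makes the principal part depend on the unknown. However, since we linearize at a semitrivial steady state where $u\equiv 0$, the cross-diffusion term drops out of the linearization and the resulting linearized problem is genuinely semilinear with smooth elliptic principal parts, so the standard linearization principle applies without modification. Beyond that, everything reduces to invocations of Lemmas \ref{le31} and \ref{le32} and the asymptotic properties of $\theta_\lambda$ already cited in the preceding discussion.
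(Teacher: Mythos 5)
Your proposal follows essentially the same route as the paper: linearize at $(\theta_\lambda,0)$, use the upper-triangular structure and Riesz--Schauder theory to split the spectrum into the two diagonal blocks, observe that the $u$-block has strictly positive spectrum via $\sigma_1(d(0),-\lambda+2\theta_\lambda;1)>0$, reduce to the sign of the principal eigenvalue $\mu_\lambda-\mu$ of the $v$-block, prove strict monotonicity and the limits of $\mu_\lambda$ to locate a unique $\lambda^*$, and invoke the Potier-Ferry linearization principle. The argument is correct and matches the paper's proof, including the case split on $F'(0)$.
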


Similarly, the stability of $(0,\omega_\mu)$ is stated as follows.

\begin{Lemma}\label{le34'}
Assume that $\mu>D\sigma_1$ is fixed. Then the semitrivial
solution $(0,\omega_\mu)$ is asymptotically stable for
$\lambda<\lambda_\mu$, while it is unstable for $\lambda>\lambda_\mu$.
\end{Lemma}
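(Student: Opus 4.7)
The plan is to mimic the stability analysis for $(\theta_\lambda,0)$ carried out just before Lemma \ref{le34}, but applied to the semitrivial branch $\Gamma_v$. First I would compute the Fr\'echet derivative ${\mathscr L}_{(u,v)}(\lambda,0,\omega_\mu)$ by direct differentiation of the two components of ${\mathscr L}$. Because every term in the first equation of \eqref{ys12} carries a factor of $u$, the $(1,2)$-block of the resulting $2\times2$ operator matrix vanishes at $u=0$, yielding a \emph{lower-triangular} block structure
\begin{equation*}
{\mathscr L}_{(u,v)}(\lambda,0,\omega_\mu)=
\begin{pmatrix}
-{\rm div}\!\left(d(\omega_\mu)\nabla\cdot-\chi(\omega_\mu)\nabla\omega_\mu\,\cdot\right)-\lambda-\gamma F(\omega_\mu) & 0\\[1mm]
F(\omega_\mu) & -D\Delta-\mu+2\omega_\mu
\end{pmatrix}.
\end{equation*}
Invoking the Riesz--Schauder theory (as in Lemma \ref{le34}) the spectrum splits as the union of the spectra of the two diagonal operators.

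Next I would show the lower-right operator $-D\Delta+(2\omega_\mu-\mu)$ has strictly positive principal eigenvalue. Observing that $\omega_\mu$ itself is a positive eigenfunction of $-D\Delta+(\omega_\mu-\mu)$ with eigenvalue $0$, the strict monotonicity of $\sigma_1$ in the zeroth-order coefficient (Lemma \ref{le31}(2)) applied to the fact that $\omega_\mu>0$ in $\Omega$ gives
\begin{equation*}
\sigma_1(D,\,2\omega_\mu-\mu;\,1)>\sigma_1(D,\,\omega_\mu-\mu;\,1)=0,
\end{equation*}
so this block contributes no destabilising eigenvalue.

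Then I would analyse the upper-left block $L_1\phi:=-{\rm div}(d(\omega_\mu)\nabla\phi-\chi(\omega_\mu)\nabla\omega_\mu\,\phi)-(\lambda+\gamma F(\omega_\mu))\phi$. Using exactly the change of variables $\phi=e^{g(\omega_\mu)}\Phi$ that appeared in the proof of Lemma \ref{le33} (so that the advective term is absorbed into a self-adjoint principal part), the eigenvalue equation $L_1\phi=\sigma\phi$ is equivalent to
\begin{equation*}
-{\rm div}\!\left(d(\omega_\mu)e^{g(\omega_\mu)}\nabla\Phi\right)-\gamma F(\omega_\mu)e^{g(\omega_\mu)}\Phi=(\sigma+\lambda)\,e^{g(\omega_\mu)}\Phi,
\end{equation*}
and comparing with \eqref{1.8a}--\eqref{1.9} I can read off that the principal eigenvalue of $L_1$ is exactly $\lambda_\mu-\lambda$.

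Combining the two blocks, the principal eigenvalue of ${\mathscr L}_{(u,v)}(\lambda,0,\omega_\mu)$ is $\min\{\lambda_\mu-\lambda,\,\sigma_1(D,2\omega_\mu-\mu;1)\}$, whose sign is governed entirely by $\lambda_\mu-\lambda$. The linearisation principle for quasilinear evolution equations (the same one used for Lemma \ref{le34}, cf.\ \cite{p1981}) then gives asymptotic stability when $\lambda<\lambda_\mu$ and instability when $\lambda>\lambda_\mu$. The only mildly delicate step I anticipate is confirming that the change of variables preserves the sign and the simplicity of the principal eigenvalue (so that the principal eigenvalue of $L_1$ on $X$ genuinely equals $\lambda_\mu-\lambda$), but since the transformation $\phi\mapsto e^{g(\omega_\mu)}\Phi$ is a bounded invertible multiplier on $X$ with $g(\omega_\mu)\in C^2(\overline\Omega)$, this is routine and the rest of the argument is a direct transcription of the $(\theta_\lambda,0)$ analysis.
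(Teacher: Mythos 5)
Your proof is correct and matches exactly the argument the paper intends: the paper does not write out a proof of this lemma but points (``Similarly'') to the block-triangular/Riesz--Schauder computation used for $(\theta_\lambda,0)$, and your computation—the lower-triangular Fr\'echet derivative, $\sigma_1(D,2\omega_\mu-\mu;1)>0$ via comparison with $\sigma_1(D,\omega_\mu-\mu;1)=0$, and the weighted-eigenvalue change of variables $\phi=e^{g(\omega_\mu)}\Phi$ reducing the upper-left block to the problem \eqref{1.9} with principal eigenvalue $\lambda_\mu-\lambda$—is precisely that argument carried over to the $(0,\omega_\mu)$ branch.
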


When $F'(0)=0$, the following lemma gives the global structure of the local
curve $\mathcal{S}_1$ given by (\ref{3.8}) in the $(\lambda,u,v)$ plane.

\begin{Lemma}\label{le35}
Assume that $\mu>D\sigma_1$ and $F'(0)=0$. Then the local curve $\mathcal{S}_1$
 given by \eqref{3.8} can be extended to an unbounded global continuum of positive
solutions to \eqref{ys12} along the positive values of $\lambda$.
\end{Lemma}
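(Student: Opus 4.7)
The plan is to apply the global bifurcation theorem of Rabinowitz, taking as seed the local branch $\mathcal{S}_1$ produced by Lemma \ref{le33}. First I would recast ${\mathscr L}(\lambda,u,v)=0$ as a compact perturbation of the identity on $X\times X$ by inverting the leading linear elliptic parts in each equation; the $L^p$ regularity used throughout Section 2 provides the required compactness, so the Leray--Schauder degree framework applies. Since $(\lambda_\mu,0,\omega_\mu)$ is a simple bifurcation point along $\Gamma_v$ (by Steps 1--3 of Lemma \ref{le33}), the maximal connected component $\mathcal{C}_1$ of nontrivial zeros of ${\mathscr L}$ containing $\mathcal{S}_1$ must satisfy the standard Rabinowitz alternative: either (a) $\mathcal{C}_1$ is unbounded in $\mathbb{R}\times X\times X$, or (b) $\mathcal{C}_1$ meets the union of semitrivial branches $\Gamma_u\cup\Gamma_v$ at a point different from $(\lambda_\mu,0,\omega_\mu)$.

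Next I would rule out alternative (b). Any limit of positive solutions along $\mathcal{C}_1$ is a nonnegative solution $(u,v)$, and the strong maximum principle applied separately to the $u$- and $v$-equations forces each component to be either strictly positive or identically zero; thus $\mathcal{C}_1$ can only exit the positive cone by landing on $\Gamma_u$ or $\Gamma_v$. Lemma \ref{le33} identifies $\lambda_\mu$ as the only bifurcation point along $\Gamma_v$, so a return to $\Gamma_v$ at a different $\lambda$ is ruled out. On $\Gamma_u$, the linearization ${\mathscr L}_{(u,v)}(\lambda,\theta_\lambda,0)$ is block upper-triangular; when $F'(0)=0$, its lower-right block reduces to $-D\Delta-\mu$, whose principal eigenvalue is the fixed negative number $D\sigma_1-\mu$, so $0$ never belongs to the spectrum of that block. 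Since the upper-left block $-d(0)\Delta-\lambda+2\theta_\lambda$ is invertible as well (its principal eigenvalue $\sigma_1(d(0),-\lambda+2\theta_\lambda;1)>0$, as observed in the proof of Lemma \ref{le34}), the full linearization has trivial kernel all along $\Gamma_u$, excluding bifurcation from $\Gamma_u$. Hence alternative (a) must hold.

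Finally I would use the a priori estimates to locate the unboundedness on the $\lambda$-axis. By Lemma \ref{le22}, on every bounded $\lambda$-interval the positive solutions of \eqref{ys12} are uniformly bounded in $W^{2,p}(\Omega)\times W^{2,p}(\Omega)$, hence in $X\times X$. Therefore the unboundedness of $\mathcal{C}_1$ must be reflected in its projection onto the $\lambda$-axis. Theorem \ref{th11}(1) secures a lower bound $\lambda>\underline{\lambda}(\mu)>0$ along the continuum, and under the present assumption $F'(0)=0$ one has $\mathbb{F}(0)=0$, so $\min_{[0,\,\mu]}\mathbb{F}=0$ and Theorem \ref{th11}(2) provides no upper bound. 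The only remaining possibility is $\sup\{\lambda:(\lambda,u,v)\in\mathcal{C}_1\}=+\infty$, which is exactly the desired extension of $\mathcal{C}_1$ along the positive $\lambda$-axis. The main subtlety lies in step (b): one must combine the strong maximum principle with a careful spectral analysis of the block-triangular linearization along $\Gamma_u$ to preclude any re-attachment of $\mathcal{C}_1$ to the semitrivial set outside of $(\lambda_\mu,0,\omega_\mu)$; everything else is a routine translation of Rabinowitz's machinery into the present Dirichlet prey-taxis setting.
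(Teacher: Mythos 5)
Your proposal follows the paper's broad strategy (seed from the local branch, invoke a global bifurcation alternative, exclude re-attachment to the semitrivial set, then use the a priori estimates to push the continuum to $\lambda=+\infty$), but there are two genuine gaps.

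First, the opening step — ``recast ${\mathscr L}(\lambda,u,v)=0$ as a compact perturbation of the identity and apply classical Rabinowitz'' — is not routine here. The diffusion in the $u$-equation is the quasilinear operator $-\mathrm{div}\bigl(d(v)\nabla u - u\chi(v)\nabla v\bigr)$, whose leading elliptic part depends on the unknown $v$ and whose cross term $u\chi(v)\nabla v$ couples the equations. Inverting a fixed linear principal part therefore does not leave a compact remainder in the usual Leray--Schauder sense, and the classical semilinear Rabinowitz framework does not directly apply. This is precisely why the paper does not use classical Rabinowitz but instead invokes the unilateral global bifurcation theorem for quasilinear elliptic systems from \cite{cms2019} (built on \cite{l2001}). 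That theorem is tailored to systems with this kind of $v$-dependent diffusion in the $u$-equation, and delivers a sharper, four-way alternative (unboundedness; meeting $\Gamma_u$ at a point $(\lambda^*,\theta_{\lambda^*},0)$ with $\mu=\sigma_1(D,\theta_{\lambda^*}F'(0);1)$; meeting $\Gamma_v$ at a different positive solution of \eqref{ys16}; or meeting $(D\sigma_1,0,0)$), not the two-way alternative you state.

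Second, your exclusion of alternative (b) differs from, and is less complete than, the paper's. You argue that when $F'(0)=0$ the block-triangular linearization along $\Gamma_u$ has trivial kernel, hence no local bifurcation there. The paper instead rules out the accumulation of $\mathcal{C}$ at $(\lambda^*,\theta_{\lambda^*},0)$ by a normalization-and-limit argument: given $(\lambda_i,u_i,v_i)\in\mathcal{C}$ converging to $(\lambda^*,\theta_{\lambda^*},0)$, set $\hat{v}_i=v_i/\|v_i\|_{C(\overline\Omega)}$; standard $L^p$ estimates give $\hat{v}_i\to\hat{v}_\infty$ in $C^1_0(\overline\Omega)$ with $\|\hat{v}_\infty\|_{C(\overline\Omega)}=1$, and since $F'(0)=\mathbb{F}(0)=0$ and $v_i\to 0$ the limit satisfies $-D\Delta\hat{v}_\infty=\mu\hat{v}_\infty$, forcing $\mu=D\sigma_1$ — a contradiction. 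Your kernel argument does preclude local bifurcation from $\Gamma_u$, but you still need to (i) justify that the continuum cannot otherwise accumulate on $\Gamma_u$ (close this via the implicit function theorem away from bifurcation points), (ii) rule out accumulation on the doubly-trivial branch $\{(\lambda,0,0)\}$, and (iii) rule out a bounded loop returning to $(\lambda_\mu,0,\omega_\mu)$; the four alternatives in \cite{cms2019} handle all of these explicitly. Your final step — Lemma \ref{le22} together with Theorem \ref{th11}(1) to conclude $\mathrm{Proj}_\lambda\mathcal{C}\supset(\lambda_\mu,\infty)$ — matches the paper.
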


\begin{proof} (1)\, Let $\mathcal{P}$ be the positive
cone in $C_0^1(\overline\Omega)$ whose interior
${\rm int}\left(\mathcal{P}\right)$ is nonempty, and let $\mathcal{S}_1$ be given by \eqref{3.8}. In view of the unilateral global bifurcation for quasilinear elliptic
systems derived in \cite{cms2019}, which is based on the unilateral
bifurcation results in \cite{l2001}, there exists a continuum
\begin{equation*}
\mathcal{C}\subset \mathbb{R}\times {\rm int}\left(\mathcal{P}\right)\times
{\rm int}\left(\mathcal{P}\right)
\end{equation*}
of positive solutions of \eqref{ys12}  such that
$\mathcal{S}_1\subset \mathcal{C}$ and $\mathcal{C}$ satisfies one of the
following:

\begin{itemize}[itemsep= -2 pt,topsep = 0 pt, leftmargin = 35 pt]
  \item [(a)] $\mathcal{C}$ is unbounded in $\mathbb{R}\times
  C_0^1(\overline{\Omega})\times C_0^1(\overline{\Omega})$;

\item [(b)] there exists a positive solution $\theta_{\lambda^*}$
  of (\ref{1.6}) such that $\left(\lambda^*,\theta_{\lambda^*},0
  \right)\in \overline{\mathcal{C}}$, where $\lambda^*$ is determined by
  \begin{equation*}
  \mu=\sigma_1(D,\,\theta_{\lambda^*}F^{\prime}(0);\,1);
  \end{equation*}

  \item [(c)] there exists another positive solution $v$ of
  (\ref{ys16}) and $v\neq\omega_\mu$ such that
  \begin{equation*}
  \left(\sigma_1(d(v)e^{g(v)},\,-\gamma F(v)
  e^{g(v)};\,e^{g(v)}),\;0,\;v\right)\in \overline{\mathcal{C}};
  \end{equation*}

  \item [(d)] $\mu=D\sigma_1$ and
  $(D\sigma_1,0,0)\in \overline{\mathcal{C}}$.
\end{itemize}

We claim that the items (b), (c) and (d) cannot occur. Since
$\mu>D\sigma_1$ is fixed, the item (d) cannot be true.
As a result of the uniqueness of positive solution of (\ref{ys16}), the
item (c) cannot occur as well. If the item (b) occurs, then
$(\lambda^*,\theta_{\lambda^*},0)\in \mathbb{R}\times X\times X$
is a bifurcation point in the sense of being the limit of a sequence in
$\mathcal{C}$. That is, there exists a sequence $\{(\lambda_i,u_i,v_i)\}\subset
\mathcal{C}$ such that
 $$(\lambda_i,u_i,v_i)\rightarrow(\lambda^*,\theta_{\lambda^*},0)~~~
{\rm in}~~~\mathbb{R}\times C_0^1(\overline{\Omega})\times C_0^1(\overline{\Omega}).$$
Noticing that $(\lambda_i,u_i,v_i)$ satisfies
\begin{equation*}
\left\{\begin{array}{ll}
-{\rm div}\left(d(v_i)\nabla u_i-u_i\chi(v_i)\nabla v_i\right)=
\lambda_i u_i-u_i^2+\gamma u_i F(v_i),  & x \in \Omega,\\[1mm]
-D \Delta v_i=\mu v_i-v_i^2-u_i F(v_i),  & x \in \Omega,\\[1mm]
u_i=v_i=0,  & x \in \partial \Omega.
\end{array}\right.
\end{equation*}
Define $\hat{v}_i={v_i}/{\|v_i\|_{C(\overline{\Omega})}}$. Then
$\hat{v}_i$ satisfies
\bess
\left\{\begin{array}{ll}
-D\Delta \hat{v}_i=\hat{v}_i\left(\mu -v_i-u_i \mathbb{F}(v_i)\right),
~~&x\in\Omega,\\[1mm]
\hat{v}_i=0,~~ &x\in\partial\Omega.
\end{array}\right.
\eess
By virtue of Lemma \ref{le21} and the $L^p$ theory for elliptic equations, it follows that
$\hat{v}_i$ is bounded in $W^{2,p}(\Omega)$ for all $p>1$. Thus,
by choosing a subsequence if necessary, we may assume
$$\hat{v}_i\rightarrow \hat{v}_\infty~~~{\rm in}~~~
C_0^1(\overline{\Omega})~~~{\rm with}~~~
\|\hat{v}_\infty\|_{C(\overline{\Omega})}=1.$$
Observe that $F'(0)=\mathbb{F}(0)=0$ and $v_i\to 0$ as $i\to\infty$. Letting  $i\rightarrow\infty$ we see that $\hat{v}_\infty$ satisfies
 \bess
\left\{\begin{array}{ll}
-D\Delta \hat{v}_\infty=\mu\hat{v}_\infty,~~&x\in\Omega,\\[1mm]
\hat{v}_\infty=0,~~ &x\in\partial\Omega
 \end{array}\right.
 \eess
in the weak sense. Since $\hat{v}_\infty\geq0$ and
$\|\hat{v}_\infty\|_{C(\overline{\Omega})}=1$, we must have
$\mu=D\sigma_1$, which is a contradiction. Hence, the item (b)
cannot be satisfied.

Consequently, the only possibility is the item (a), and so $\mathcal{C}$ is
unbounded in $\mathbb{R}\times C_0^1(\overline\Omega)\times C_0^1(
\overline\Omega)$. From Lemma \ref{le22} and the Sobolev embedding theorem, it
follows that any positive solution of \eqref{ys12} is bounded in
$C_0^1(\overline\Omega)\times C_0^1(\overline\Omega)$, provided
$\lambda$ is bounded. As a consequence, $\mathrm{Proj}_{\lambda}\mathcal{C}$
is unbounded. Additionally, Theorem \ref{th11}(1) shows that
$\mathrm{Proj}_{\lambda}\mathcal{C}$ is bounded below, and hence,
$\mathrm{Proj}_{\lambda}\mathcal{C}\supset(\lambda_\mu,\infty)$. The proof
of Lemma \ref{le35} is complete.
\end{proof}

Theorem \ref{th11}(3) will follow as a consequence of the results in this
subsection.

\subsection{Proof of Theorem \ref{th11}(4)}
%The purpose of this subsection is to investigate the bifurcation structure
%of the set of positive solutions of \eqref{ys12} in the case that $F'(0)>0$
%and complete the proof of Theorem \ref{th11}(4).
%Ecologically, Theorem \ref{th11}(4) shows that the prey can coexist with
%the predator only if its growth rate is not too low and too high, provided
%that the interaction between two species satisfies
%$\min_{[0,\,\mu]}\mathbb{F}(v)>0$.

As shown in Lemma \ref{le34}, the stability of $(\theta_\lambda,0)$
will change as $\lambda$ increases across $\lambda^*$. Moreover, we will
prove that $(\lambda^*,\theta_{\lambda^*},0)$ is a bifurcation point
on $\Gamma_u$ from which positive solutions of \eqref{ys12} emanate. Our
argument below is very similar to that of Lemma \ref{le33}, and hence we will
only sketch it here.

Let $\psi_{\lambda^*}$ be the positive eigenfunction associated to $\sigma_1(D,\,\theta_{\lambda^*}F'(0);\,1)$, and set
 \begin{equation*}
\phi_{\lambda^*}=(-d(0)\Delta+2\theta_{\lambda^*}-\lambda^*)^{-1}
\left[{\rm div}\left(d^{\prime}(0)\nabla\theta_{\lambda^*}
\psi_{\lambda^*}-\theta_{\lambda^*}\chi(0)\nabla\psi_{\lambda^*}
\right)+\gamma\theta_{\lambda^*}F'(0)\psi_{\lambda^*}\right].
\end{equation*}

\begin{Lemma}\label{le36}
Assume that $\mu>D\sigma_1$ and $F'(0)>0$. Then positive
solutions of \eqref{ys12} emanate from $\Gamma_{u}$ if and only if
$\lambda=\lambda^*$. To be precise, there is a small neighborhood
$\mathcal{N}_2$ of $(\lambda,u,v)=(\lambda^*,\theta_{d(0),
\lambda^*},0)$ in $\mathbb{R}\times X\times X$ such that
${\mathscr L}^{-1}(0)\cap \mathcal{N}_2$ consists of the union of
$\Gamma_{u}\cap \mathcal{N}_2$ and the local curve
\begin{equation*}
(\lambda(s),u(s),v(s))=(\lambda^*+\lambda^*(s),\theta_{\lambda^*}
+s(\phi_{\lambda^*}+\phi(s)),s(\psi_{\lambda^*}+\psi(s)))
\end{equation*}
for $s\in(-\varepsilon,\varepsilon)$ with some $\varepsilon>0$, where
$(\lambda^*(s),\phi(s),\psi(s))\in \mathbb{R}\times X\times X$ is
continuously differentiable for $s\in(-\varepsilon,\varepsilon)$ and
satisfies $(\lambda^*(0),\phi(0),\psi(0))=(0,0,0)$. Therefore, positive
solutions contained in ${\mathscr L}^{-1}(0)\cap \mathcal{N}_2$ can be
expressed as
\begin{equation*}
\mathcal{S}_2=\{(\lambda^*+\lambda^*(s),\theta_{\lambda^*}
+s(\phi_{\lambda^*}+\phi(s)),s(\psi_{\lambda^*}+\psi(s))):
s\in(0,\varepsilon)\}.
\end{equation*}
\end{Lemma}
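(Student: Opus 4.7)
The plan is to apply the Crandall--Rabinowitz local bifurcation theorem \cite[Theorem 1.7]{cr1971} to the shifted operator $G(\lambda,\tilde u,v):=\mathscr{L}(\lambda,\theta_\lambda+\tilde u,v)$ at $(\lambda^*,0,0)$, following the four-step template of Lemma \ref{le33} but omitting the derivative formula since Lemma \ref{le36} does not record $\lambda^{*\prime}(0)$. The block lower-triangular structure of $\mathscr{L}_{(u,v)}(\lambda,\theta_\lambda,0)$ displayed before Lemma \ref{le34} will make the Fredholm analysis straightforward; the only delicate point will be the transversality check.

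For Step 1 (one-dimensional kernel), any $(\phi,\psi)\in\mathrm{Ker}[\mathscr{L}_{(u,v)}(\lambda^*,\theta_{\lambda^*},0)]$ must have $\psi$ in the kernel of $L_3:=-D\Delta+\theta_{\lambda^*}F'(0)-\mu$; since $\mu=\mu_{\lambda^*}$ by the defining equation of $\lambda^*$ and $\mu_{\lambda^*}$ is simple with positive eigenfunction $\psi_{\lambda^*}$, one must have $\psi=c\psi_{\lambda^*}$. The first row then reduces to $L_1\phi=-cL_2\psi_{\lambda^*}$, where $L_1:=-d(0)\Delta-\lambda^*+2\theta_{\lambda^*}$ is invertible because $\sigma_1(d(0),2\theta_{\lambda^*}-\lambda^*;1)>0$ (as remarked before Lemma \ref{le34}); taking $c=1$ reproduces $\phi=\phi_{\lambda^*}$ as defined in the statement. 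For Step 2 (codimension-one range), I would invert the second equation first: self-adjointness of $L_3$ and the Fredholm alternative force the solvability condition $\int_\Omega z\psi_{\lambda^*}\,dx=0$, after which the first equation is solved uniquely by $L_1^{-1}$. Hence the range is the codimension-one hyperplane $\{(w,z)\in Y\times Y:\int_\Omega z\psi_{\lambda^*}\,dx=0\}$.

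For Step 3 (transversality), the cokernel is represented by the functional $(w,z)\mapsto\int_\Omega z\psi_{\lambda^*}\,dx$, so only the second component of $\partial_\lambda\mathscr{L}_{(u,v)}(\lambda,\theta_\lambda,0)(\phi_{\lambda^*},\psi_{\lambda^*})\big|_{\lambda=\lambda^*}$ matters. In the $(2,2)$ block $-D\Delta-\mu+\theta_\lambda F'(0)$, the only $\lambda$-dependent term is $\theta_\lambda F'(0)$, whose $\lambda$-derivative is $\theta_\lambda'F'(0)$ with $\theta_\lambda':=\partial_\lambda\theta_\lambda$. Hence the transversality pairing reduces to
\[F'(0)\int_\Omega\theta_{\lambda^*}'\,\psi_{\lambda^*}^2\,dx>0,\]
which holds because $F'(0)>0$ by hypothesis, $\theta_\lambda'>0$ follows from differentiating the logistic equation (giving $(-d(0)\Delta-\lambda+2\theta_\lambda)\theta_\lambda'=\theta_\lambda$ and applying the maximum principle, consistent with Lemma \ref{le32}), and $\psi_{\lambda^*}>0$. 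Crandall--Rabinowitz then delivers the stated local curve.

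For the necessity (``only if'') direction, I would adapt the sequential argument used against item (b) in the proof of Lemma \ref{le35}: if positive solutions bifurcate at some $(\lambda_0,\theta_{\lambda_0},0)$, a sequence $(\lambda_i,u_i,v_i)\to(\lambda_0,\theta_{\lambda_0},0)$ of positive solutions renormalized as $\hat v_i:=v_i/\|v_i\|_{C(\overline\Omega)}$ is compact in $C_0^1(\overline\Omega)$ by Lemma \ref{le22}, and yields in the limit a positive function $\hat v_\infty$ solving $-D\Delta\hat v_\infty+\theta_{\lambda_0}F'(0)\hat v_\infty=\mu\hat v_\infty$; positivity forces $\mu=\mu_{\lambda_0}$, and strict monotonicity of $\mu_\lambda$ in $\lambda$ forces $\lambda_0=\lambda^*$. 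The main obstacle I anticipate is the transversality verification, since it is the single step where the assumption $F'(0)>0$ is genuinely used; if $F'(0)=0$ the displayed integral vanishes and the whole mechanism of Lemma \ref{le36} collapses, which is consistent with Theorem \ref{th11}(3-4) ruling out bifurcation from $\Gamma_u$ precisely in that case.
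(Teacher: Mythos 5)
Your proposal is correct and follows essentially the same route as the paper: both apply the Crandall--Rabinowitz theorem to the shifted operator $\mathscr{K}(\lambda,w,v)=\mathscr{L}(\lambda,w+\theta_\lambda,v)$, exploit the block-triangular linearization to identify the kernel $\mathrm{span}\langle(\phi_{\lambda^*},\psi_{\lambda^*})\rangle$ and the codimension-one range $\{\int_\Omega z\psi_{\lambda^*}=0\}$, and reduce transversality to $F'(0)\int_\Omega\partial_\lambda\theta_\lambda|_{\lambda^*}\psi_{\lambda^*}^2\,dx$ being nonzero via positivity of $\partial_\lambda\theta_\lambda$. You compute the transversality pairing directly while the paper runs it as a contradiction, and you additionally spell out the ``only if'' direction via the blow-up/compactness argument (inherited from Lemma~\ref{le35}), which the paper leaves implicit; both variations are sound and in the spirit of the original proof.
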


\begin{proof}
We employ the change of variables $w=u-\theta_\lambda$ and define
the operator
${\mathscr K}:\mathbb{R}\times X\times X\mapsto Y\times Y$ by
$${\mathscr K}(\lambda,w,v):={\mathscr L}{\left(\lambda,
w+\theta_\lambda,v\right)}.$$
Notice that the change of variables maps $\Gamma_{u}$ to the set
$\{(\lambda,0,0):\lambda>d(0)\sigma_1\}$. We next seek for
a bifurcation point on this set from which solutions with $v>0$ of
${\mathscr K}{(\lambda,w,v)}=0$ emanate. By straightforward
calculations, the linearized operator of ${\mathscr K}_{(w,v)}(\lambda,w,v)$
around $(0,0)$ is given by
$${\mathscr K}_{(w,v)}(\lambda,0,0)=\left(\begin{array}{cc}
   -d(0)\Delta-\lambda+2\theta_\lambda\;\;
& \;\; -{\rm div}\left(d^{\prime}(0)\nabla\theta_\lambda\cdot
   -\theta_\lambda\chi(0)\nabla\cdot\right)-\gamma
   \theta_\lambda F^{\prime}(0)  \\ [2mm]
   0&  -D\Delta-\mu+\theta_\lambda F^{\prime}(0)
 \end{array}\right).$$
Now we determine $\mathrm{Ker}[{\mathscr K}_{(w,v)}(\lambda^*,0,0)]$.
When $(\phi,\psi)\in\mathrm{Ker}[{\mathscr K}_{(w,v)}(\lambda^*,0,0)]$, there holds:
$$\left\{\begin{array}{ll}
-d(0)\Delta\phi+(2\theta_{\lambda^*}-\lambda^*)\phi-
{\rm div}\left(d^{\prime}(0)\nabla\theta_{\lambda^*}\psi
-\theta_{\lambda^*}\chi(0)\nabla\psi\right)=\gamma\theta_{\lambda^*}
F^{\prime}(0)\psi,\;\; & x\in\Omega,  \\[1mm]
-D\Delta\psi+\theta_{\lambda^*} F^{\prime}(0)\psi=\mu\psi,
&  x\in\Omega,  \\[1mm]
\phi=\psi=0,  &  x\in\partial\Omega.
 \end{array}\right.$$
By the definition of $\lm^*=\lm^*(\mu)$, we have $\mu=\mu_{\lm^*}=\sigma_1(D,\,\theta_{\lambda^*}F'(0);\,1)$.
It follows that  $\psi=c\psi_{\lambda^*}$ for some constant $c$. Consequently, $\phi=c\phi_{\lambda^*}$.
This shows that $\mathrm{Ker}[{\mathscr K}_{(w,v)}(\lambda^*,0,0)]=
\mathrm{span}\langle(\phi_{\lambda^*},\psi_{\lambda^*})\rangle$, and
thereby, $\mathrm{Ker}[{\mathscr K}_{(w,v)}(\lambda^*,0,0)]$
is one-dimensional.

As shown in the proof of Step 2 of Lemma \ref{le33}, we can prove that
$(w,z)\in \mathrm{Range}[{\mathscr K}_{(w,v)}(\lambda^*,0,0)]$
implies $\int_{\Omega}z\psi_{\lambda^*}dx=0$. Thus, the codimension of
$\mathrm{Range}[{\mathscr K}_{(U,v)}(\lambda^*,0,0)]$ is $1$. We next verify the transversality condition
 \bes{\mathscr K}_{(w,v),\lambda}(\lambda^*,0,0)
\left(\begin{array}{l}
\phi_{\lambda^*} \\
\psi_{\lambda^*}\end{array}\right)
\not\in\mathrm{Range}[{\mathscr K}_{(w,v)}(\lambda^*,0,0)].
 \label{3.17}\ees
Suppose for contradiction that there exists $(\phi,\psi)\in X\times X$ such that
$${\mathscr K}_{(w,v)}(\lambda^*,0,0)
\left(\begin{array}{l}\phi \\
\psi\end{array}\right)
={\mathscr K}_{(w,v),\lambda}(\lambda^*,0,0)
\left(\begin{array}{l}
\phi_{\lambda^*} \\
\psi_{\lambda^*}\end{array}\right).$$
A simple calculation yields
$$\left\{\begin{array}{ll}
-D\Delta\psi+\left(\theta_{\lambda^*}F^{\prime}(0)-\mu_{\lm^*}\right)\psi=
\frac{\partial \theta_\lambda}{\partial \lambda}\Big|_{\lambda=\lambda^*}
F^{\prime}(0)\psi_{\lambda^*},  &  x\in\Omega,  \\[1mm]
\psi=0,  &  x\in\partial\Omega.
\end{array}\right.$$
We take the $L^2$-inner product of the above equation with $\psi_{\lambda^*}$
to obtain
 \bes\int_{\Omega}\frac{\partial \theta_\lambda}{\partial \lambda}
\Big|_{\lambda=\lambda^*}F^{\prime}(0)\psi^2_{\lambda^*}dx=0.
\label{3.18}\ees
Recall that $\theta_\lm$ satisfies \eqref{1.6}. It follows that
 \[\sigma_1(d(0),\,2\theta_\lm-\lm;\,1)>\sigma_1(d(0),\,\theta_\lm-\lm;\,1)=0,\]
and the function $w=\frac{\partial \theta_\lambda}{\partial\lambda}$ satisfies
\bess
\left\{\begin{array}{ll}
-d(0)\Delta w+(2\theta_\lm-\lm)w=\theta_\lm>0,~~&x\in\Omega,\\[1mm]
w=0,~~ &x\in\partial\Omega.
\end{array}\right.
 \eess
The strong maximum principle gives $w>0$ in $\Omega$, i.e., $\frac{\partial \theta_\lambda}{\partial
\lambda}\big|_{\lambda=\lambda^*}> 0$ in $\Omega$. Since $\psi_{\lambda^*}> 0$ in $\Omega$, the equation \eqref{3.18} is impossible. Consequently, \eqref{3.17} holds. We can apply the local bifurcation theorem of Crandall
and Rabinowitz (see \cite[Theorem 1.7]{cr1971}) to complete the proof of
Lemma \ref{le36}.
\end{proof}

When $F'(0)>0$, the following lemma gives the global structure of the local
curve $\mathcal{S}_1$ given by (\ref{3.8}) in the $(\lambda,u,v)$ plane.

\begin{Lemma}\label{le37}
Assume that $\mu>D\sigma_1$ and $F'(0)>0$. Then the local curve
$\mathcal{S}_1$ given by (\ref{3.8}) can be extended to a bounded global continuum of positive
solutions to \eqref{ys12} which joins the semitrivial solutions branch
$\Gamma_u$ at $(\lambda^*,\theta_{\lambda^*},0)$.
\end{Lemma}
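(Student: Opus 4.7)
\textbf{Proof proposal for Lemma \ref{le37}.} My plan is to apply the same unilateral global bifurcation framework used in Lemma \ref{le35} to the local curve $\mathcal{S}_1$ emanating from $(\lambda_\mu,0,\omega_\mu)$, and then use the a priori estimates of Section 2 together with the sharper upper bound on $\lambda$ that is available under the hypothesis $F'(0)>0$ to force the resulting global continuum to be bounded and therefore to connect back to the other semitrivial branch $\Gamma_u$ at $(\lambda^*,\theta_{\lambda^*},0)$.

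First, I would invoke the unilateral global bifurcation theorem from \cite{l2001} (in the quasilinear form given in \cite{cms2019}), exactly as in the proof of Lemma \ref{le35}, to produce a continuum
\[
\mathcal{C}\subset\mathbb{R}\times\mathrm{int}(\mathcal{P})\times\mathrm{int}(\mathcal{P})
\]
of positive solutions of \eqref{ys12} with $\mathcal{S}_1\subset\mathcal{C}$ and satisfying exactly one of the four alternatives (a)--(d) listed in the proof of Lemma \ref{le35}. Alternative (c) is ruled out by the uniqueness of the positive solution $\omega_\mu$ of \eqref{ys16}, and alternative (d) is ruled out because $\mu>D\sigma_1$ is fixed; these two exclusions are identical to those in Lemma \ref{le35}. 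The crucial new ingredient here is that $F'(0)>0$ gives $\mathbb{F}(0)>0$, so by continuity and positivity of $\mathbb{F}$ on the compact interval $[0,\mu]$ we have $\min_{[0,\mu]}\mathbb{F}>0$; hence Theorem \ref{th11}(2) yields an upper bound $\lambda\leq\overline{\lambda}$ along $\mathcal{C}$. Combined with the lower bound $\lambda>\underline{\lambda}$ from Theorem \ref{th11}(1), this shows that $\mathrm{Proj}_{\lambda}\mathcal{C}$ lies in the compact interval $[\underline{\lambda},\overline{\lambda}]$. Lemma \ref{le22} then bounds $(u,v)$ in $W^{2,p}(\Omega)\times W^{2,p}(\Omega)$ for any $p>n$, and the Sobolev embedding $W^{2,p}(\Omega)\hookrightarrow C^1(\overline{\Omega})$ gives boundedness of $\mathcal{C}$ in $\mathbb{R}\times C_0^1(\overline{\Omega})\times C_0^1(\overline{\Omega})$, excluding alternative (a).

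Since (a), (c) and (d) are all excluded, alternative (b) must occur, so there is a point $(\tilde\lambda,\theta_{\tilde\lambda},0)\in\overline{\mathcal{C}}$ with $\mu=\sigma_1(D,\theta_{\tilde\lambda}F'(0);1)$. By Theorem \ref{th11}(4-1), together with the strict monotonicity of the map $\lambda\mapsto\mu_\lambda$ established just before Lemma \ref{le34}, this relation determines $\tilde\lambda$ uniquely as $\tilde\lambda=\lambda^*$, and Lemma \ref{le36} independently confirms that $(\lambda^*,\theta_{\lambda^*},0)$ is the only bifurcation point on $\Gamma_u$ from which positive solutions of \eqref{ys12} emanate. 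The main technical point, rather than a serious obstacle, is verifying that a sequence of positive solutions $(\lambda_i,u_i,v_i)\in\mathcal{C}$ with $v_i\to 0$ forces $u_i\to\theta_{\tilde\lambda}$ and identifies $\tilde\lambda=\lambda^*$; this is handled by passing to the renormalisation $\hat v_i=v_i/\|v_i\|_{C(\overline{\Omega})}$ as in Lemma \ref{le35}, where now the hypothesis $\mathbb{F}(0)=F'(0)>0$ produces the eigenvalue identity $\mu=\sigma_1(D,\theta_{\tilde\lambda}F'(0);1)$ at the limit, rather than the contradiction $\mu=D\sigma_1$ obtained in the $F'(0)=0$ case.
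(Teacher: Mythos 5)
Your proposal is correct and follows essentially the same route as the paper: invoke the continuum $\mathcal{C}$ and alternatives (a)--(d) from Lemma \ref{le35}, rule out (c) and (d) as before, use $\mathbb{F}(0)=F'(0)>0$ together with Theorem \ref{th11}(1)--(2) to bound $\mathrm{Proj}_\lambda\mathcal{C}$, then Lemma \ref{le22} plus Sobolev embedding to rule out (a), leaving alternative (b). The extra discussion of identifying $\tilde\lambda=\lambda^*$ via the renormalisation of $v_i$ is consistent with the structure of alternative (b) but is not needed as a separate step, since (b) is already stated in terms of $\lambda^*$ defined by $\mu=\sigma_1(D,\theta_{\lambda^*}F'(0);1)$.
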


\begin{proof}
As shown in Lemma \ref{le35}, there exists a continuum $\mathcal{C}\subset
\mathbb{R}\times{\rm int}(\mathcal{P})\times{\rm int}(\mathcal{P})$
of positive solutions of \eqref{ys12} such that $\mathcal{S}_1\subset
\mathcal{C}$ and $\mathcal{C}$ satisfies one of the items (a)-(d). For the
same reason as stated above, it is clear that items (c) and (d) cannot occur.
When $F'(0)>0$, i.e., $\mathbb{F}(0)>0$, Theorem \ref{th11}(1) and (2) show
that \eqref{ys12} has no positive solution if $\lambda$ is too small or large.
Moreover, it follows from Lemma \ref{le22} that any positive solution is
bounded in $W^{2,p}(\Omega)\times W^{2,p}(\Omega)$ for all $p>1$,
provided $\lambda$ is bounded. According to Sobolev embedding theorem, any positive solution is bounded in
$C_0^1(\overline\Omega)\times C_0^1(\overline\Omega)$, and thus (a)
cannot occur. This means that the continuum $\mathcal{C}$ of positive
solutions of \eqref{ys12} must satisfy item (b). Consequently, $\mathcal{C}$
joins the semitrivial solution branch $\Gamma_u$ at
$(\lambda^*,\theta_{\lambda^*},0)$. The proof of Lemma \ref{le37}
is complete.
\end{proof}

Theorem \ref{th11}(4) will follow as a consequence of the results in this
subsection.

\section{Summary}
This paper is devoted to a class of predator-prey systems with prey-taxis,
where the predator species is assumed not only to move around randomly, but
also to be able to direct their movement toward higher concentration of the
prey species. We use some classical tools in local and global bifurcation
theory to establish the existence of some components of coexistence states
bifurcating from the semitrivial steady-state solution, and characterize
the global behavior of these components of coexistence states by applying
some techniques of quasilinear elliptic equations. Moreover, some nearly
optimal non-existence results of coexistence states are also established.

Our mathematical analysis shows that the functional response of the predator
$F(v)$ and the prey-taxis coefficient $\chi(v)$ strongly affect the behavior
of the component of coexistence states. More precisely, our analysis results
lead us to the following findings:

\begin{itemize}[itemsep= -2 pt,topsep = 0 pt, leftmargin = 20 pt]
  \item[(1)]  If the functional response of the predator $F(v)$ satisfies $F'(0)>0$,
  such as Lotka-Volterra type, Holling~type~II and Holling~type~IV response
  functions, then there exists a bounded component of coexistence states
  such that it bifurcates from the semitrivial solution branch
  $\Gamma_{v}=\{(\lambda,0,\omega_\mu):\lambda\in \mathbb{R}\}$ at
  $(\lambda_\mu,0,\omega_\mu)$ and meets the other semitrivial solution
  branch $\Gamma_{u}=\{(\lambda,\theta_\lambda,0):\lambda>d(0)
  \sigma_1\}$ at $(\lambda^*,\theta_{\lambda^*},0)$. Possible bifurcation
  diagram of coexistence states are presented in Figure \ref{fig1}.
  Moreover, our theoretical results show that:
  for weak predator growth rate, the prey-only steady state
  $(0,\omega_\mu)$ is asymptotically stable and two species cannot coexist;
  for intermediate predator growth rate, the prey-only steady
  state $(0,\omega_\mu)$ and the predator-only steady state
  $(\theta_\lambda,0)$ are both unstable and two species can
  coexist; for strong predator growth rate, the predator-only steady state
  $(\theta_\lambda,0)$ is asymptotically stable and two species
  cannot coexist. A special numerical simulation example is presented
 in Figure \ref{fig2}.
\end{itemize}

\begin{figure}[H]
\centerline{
\includegraphics[width=4.0cm,  height=3.5cm]{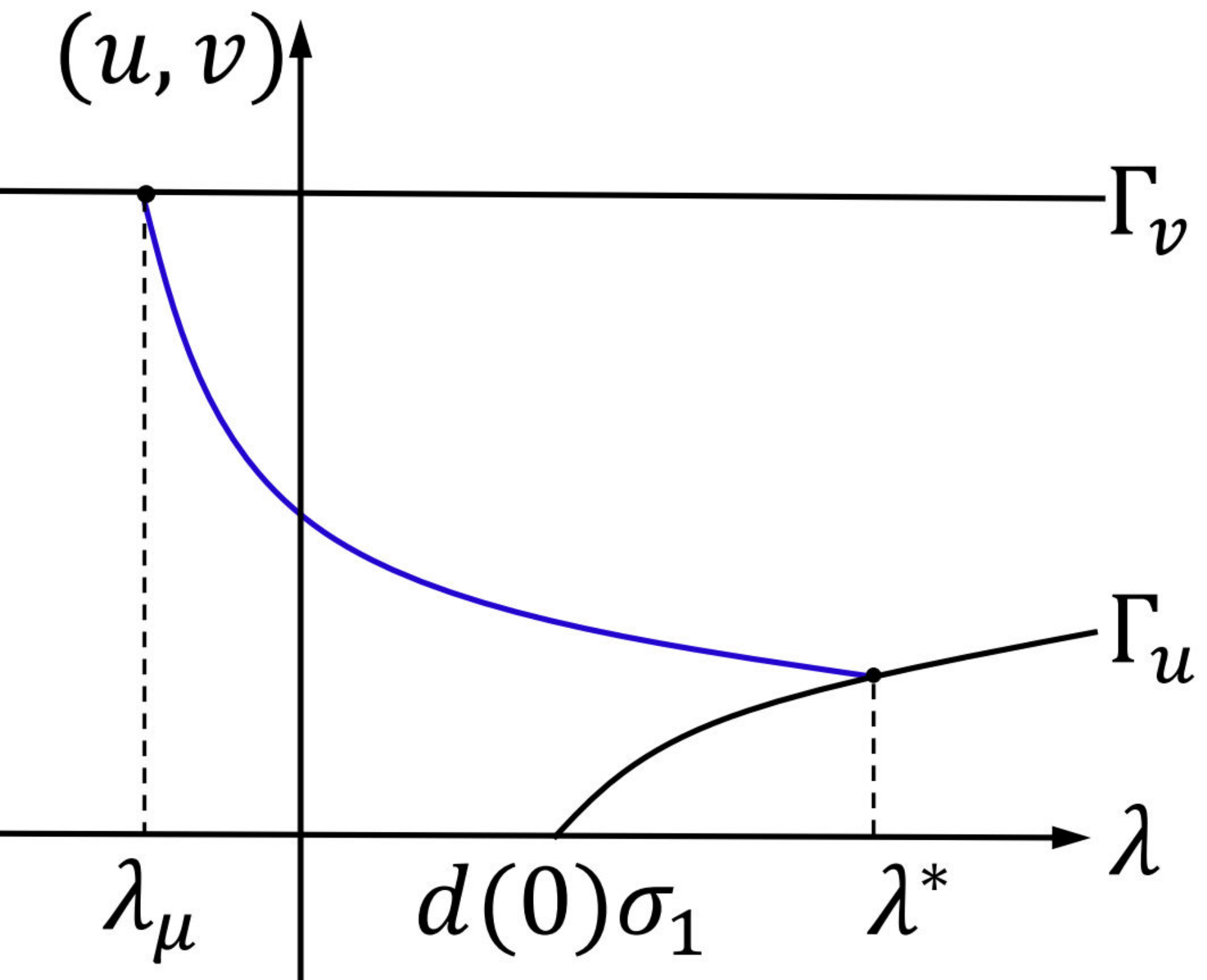}\hspace{10mm}
\includegraphics[width=4.0cm,  height=3.5cm]{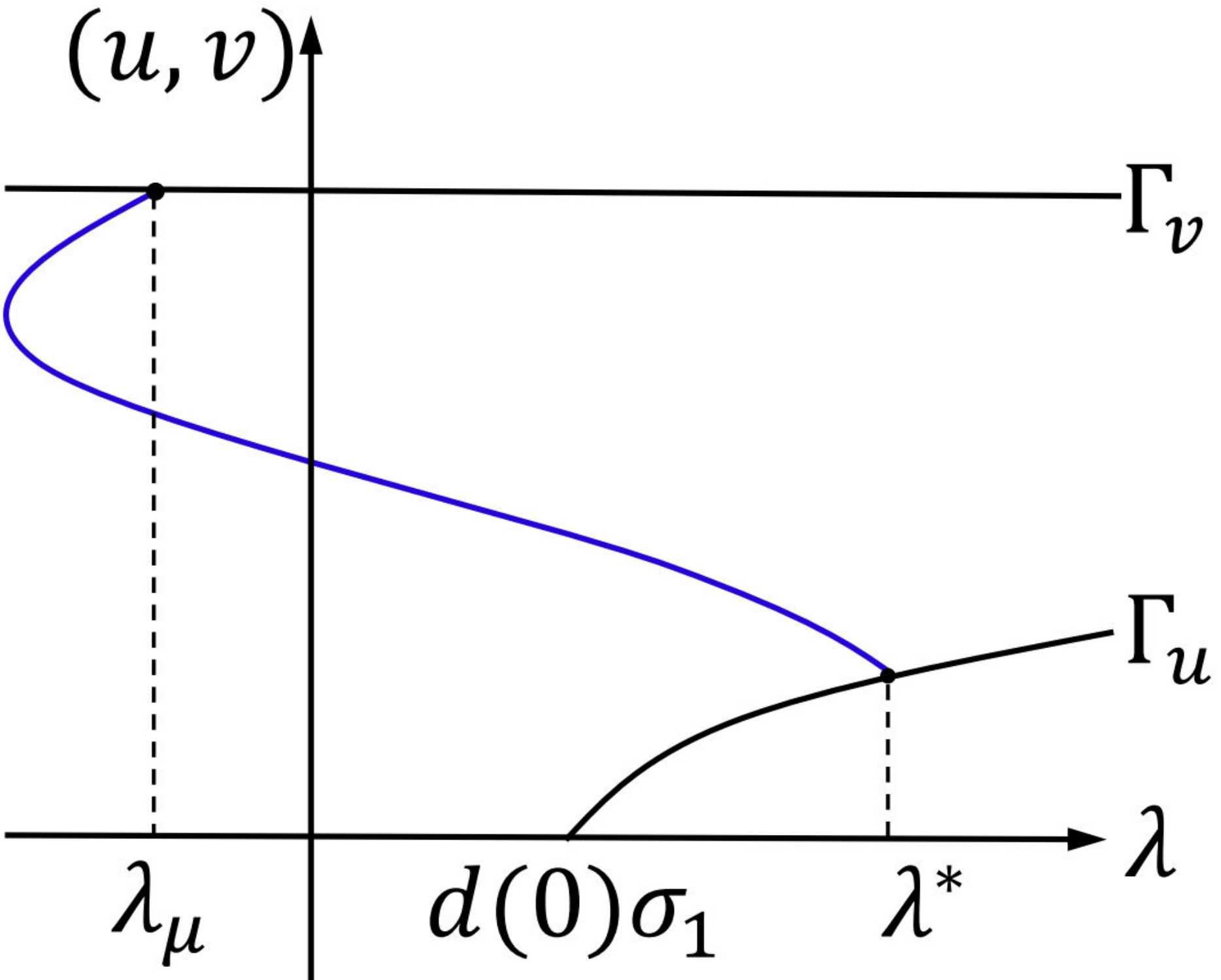}}
%\centerline{\ \ \ (a) \hspace{3.2cm} (b)}
%\vspace{0.1cm}
\caption{\footnotesize Possible bifurcation diagram of coexistence states in the
case $F'(0)>0$.}
\label{fig1}
\end{figure}

\begin{figure}[H]
\centerline{
\includegraphics[width=4.8cm,  height=4cm]{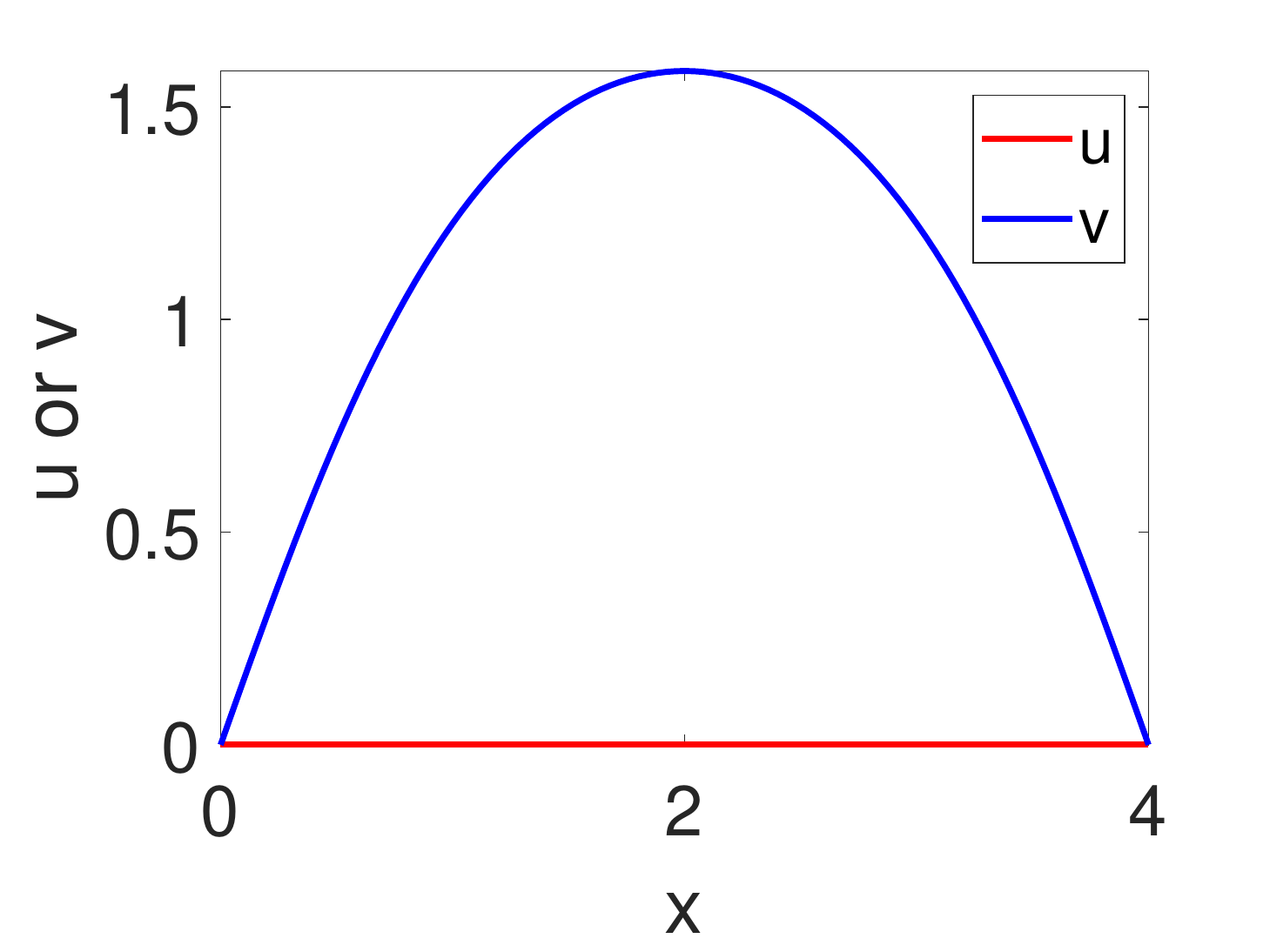}
\includegraphics[width=4.8cm,  height=4cm]{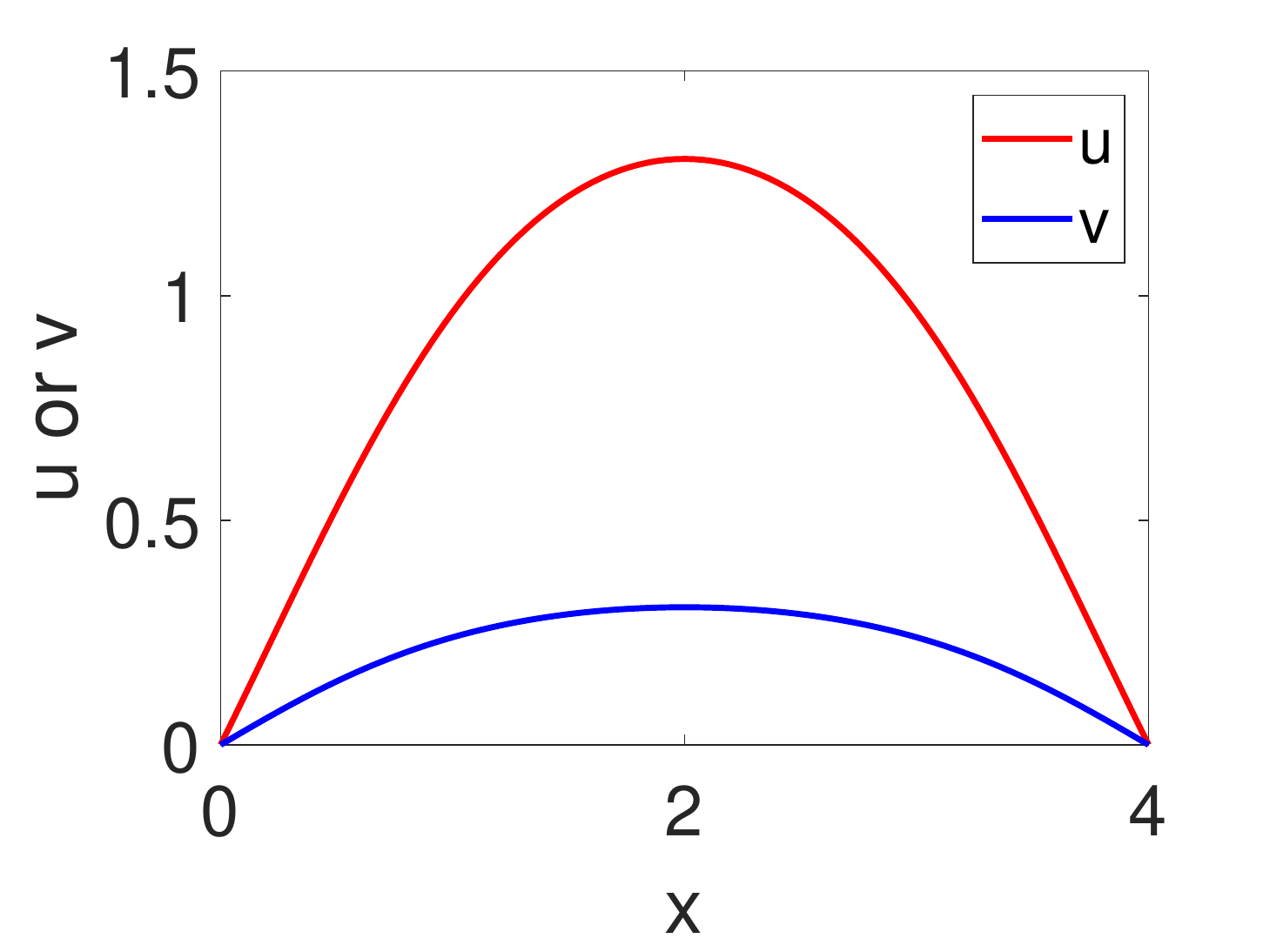}
\includegraphics[width=4.8cm,  height=4cm]{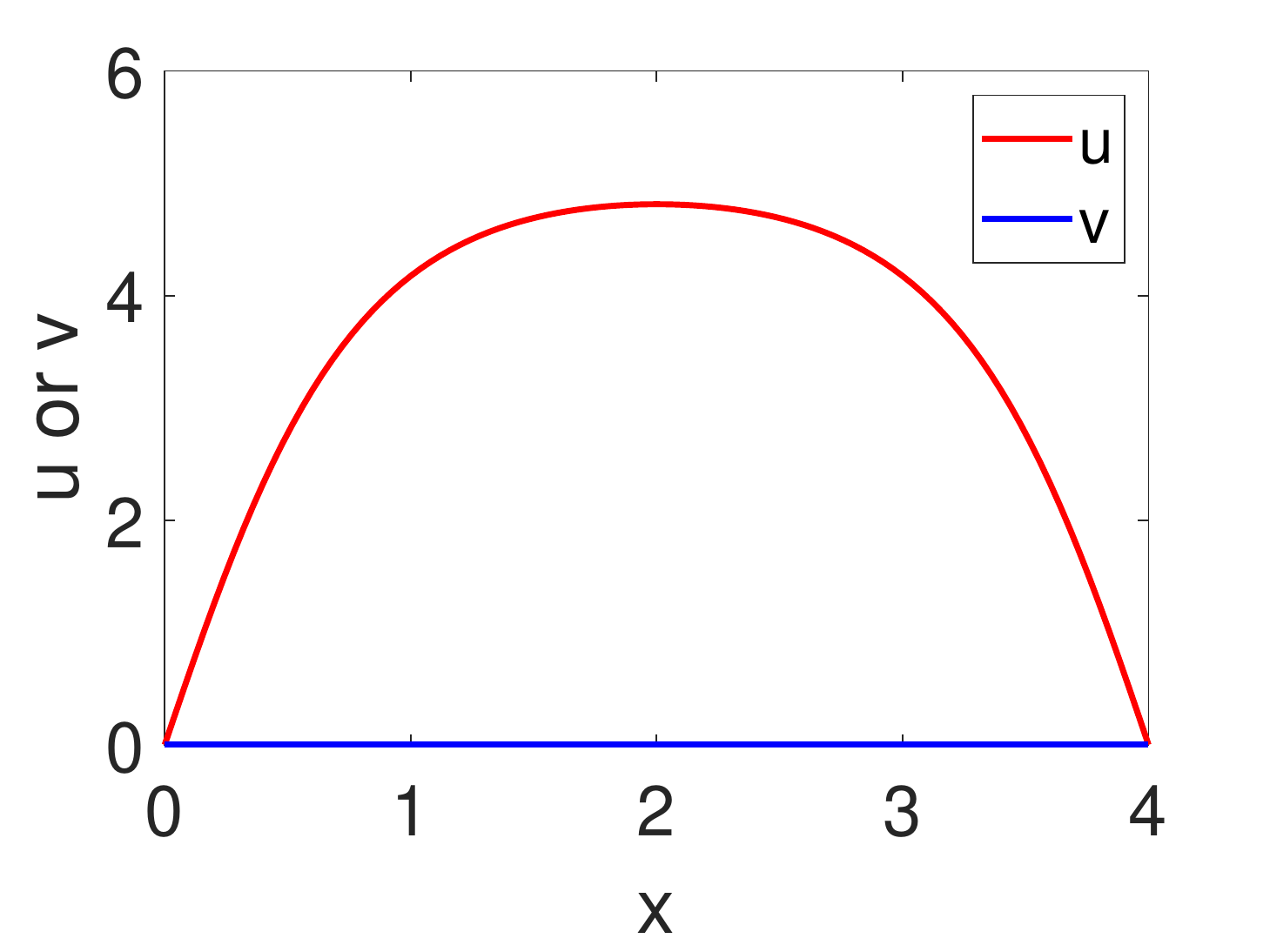}}
\centerline{\small\ \ \ (a)~~$\lambda=-1$ \hspace{2.8cm} (b)~~$\lambda=1.5$
\hspace{2.8cm} (c)~~$\lambda=5$}
\vspace{0.2cm}
\caption{\footnotesize Solution $(u(x,500),v(x,500))$ of
  \eqref{ys11} with $F(v)=v$ (Lotka-Volterra type) in the interval
  $\Omega=(0,4)$, where parameter values are: $d(v)=1$, $\chi(v)=1$, $D=1$,
  $\mu=2$, $\gamma=0.6$, and the initial value are:
  $u_0(x)=0.1+0.1\sin(5x)$, $v_0(x)=0.1+0.1\sin(5x)$.}
\label{fig2}
\end{figure}

\begin{itemize}[itemsep= -2 pt,topsep = 0 pt, leftmargin = 20 pt]
\item[(2)] If the functional response of the predator $F(v)$ satisfies $F'(0)=0$,
  such as Holling~type~III response function, then there exists an unbounded
  component of coexistence states such that it bifurcates from the semitrivial
  solution branch $\Gamma_{v}=\{(\lambda,0,\omega_\mu):\lambda\in
  \mathbb{R}\}$ at $(\lambda_\mu,0,\omega_\mu)$ and extends to infinity
  in positive values of $\lambda$. Possible bifurcation diagram of coexistence
  states are presented in Figure \ref{fig3}. Moreover, our theoretical results show that:   for weak predator growth rate, the prey-only steady state
  $(0,\omega_\mu)$ is asymptotically stable and two species
  cannot coexist;   for intermediate predator growth rate, the prey-only steady state   $(0,\omega_\mu)$ and the predator-only steady state $(\theta_{d(0),
  \lambda},0)$ are both unstable and two species can coexist;
  for strong predator growth rate, the predator-only steady state
  $(\theta_\lambda,0)$ is never a stable, and instead, two species
  can still coexist. A special numerical simulation example is presented in Figure \ref{fig4}.
\end{itemize}

\begin{figure}[H]
\centerline{
\includegraphics[width=4.0cm,  height=3.5cm]{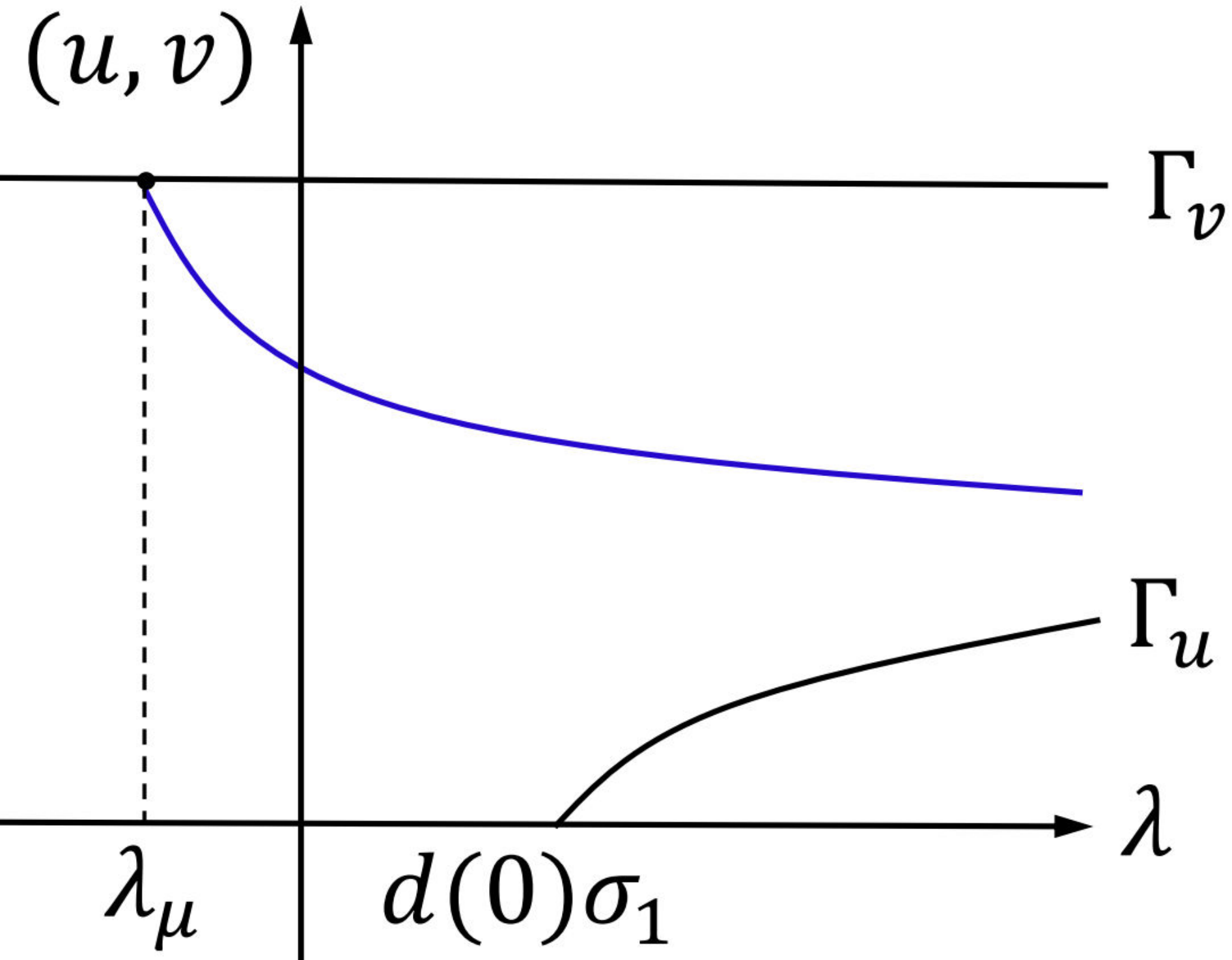}\hspace{10mm}
\includegraphics[width=4.0cm,  height=3.5cm]{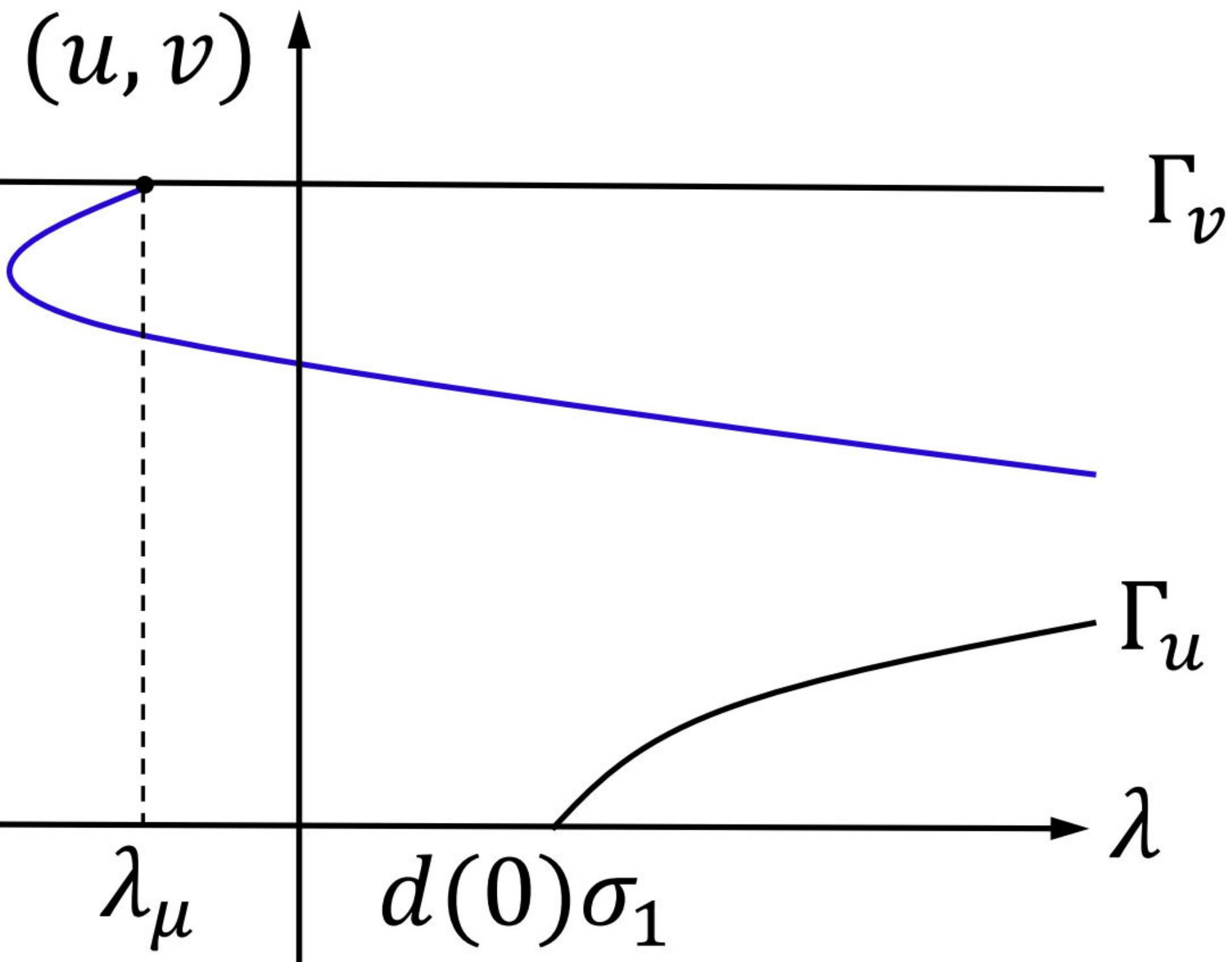}}
%\centerline{\ \ \ (a) \hspace{3.2cm} (b)}
%\vspace{0.1cm}
\caption{\footnotesize Possible bifurcation diagram of coexistence states in the
case $F'(0)=0$.}
\label{fig3}
\end{figure}
\begin{figure}[H]
\centerline{
\includegraphics[width=5.0cm,  height=4.0cm]{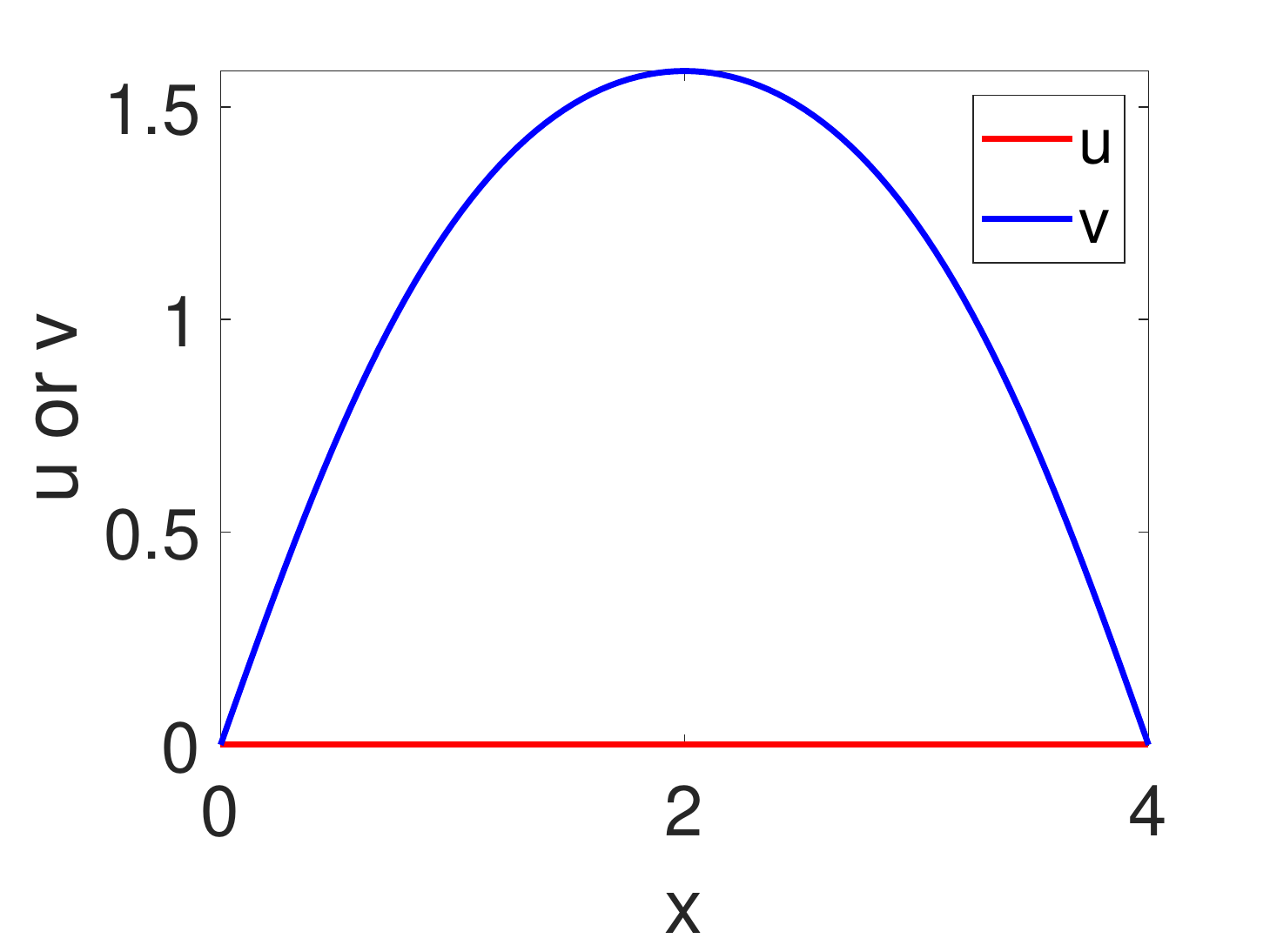}
\includegraphics[width=5.0cm,  height=4.0cm]{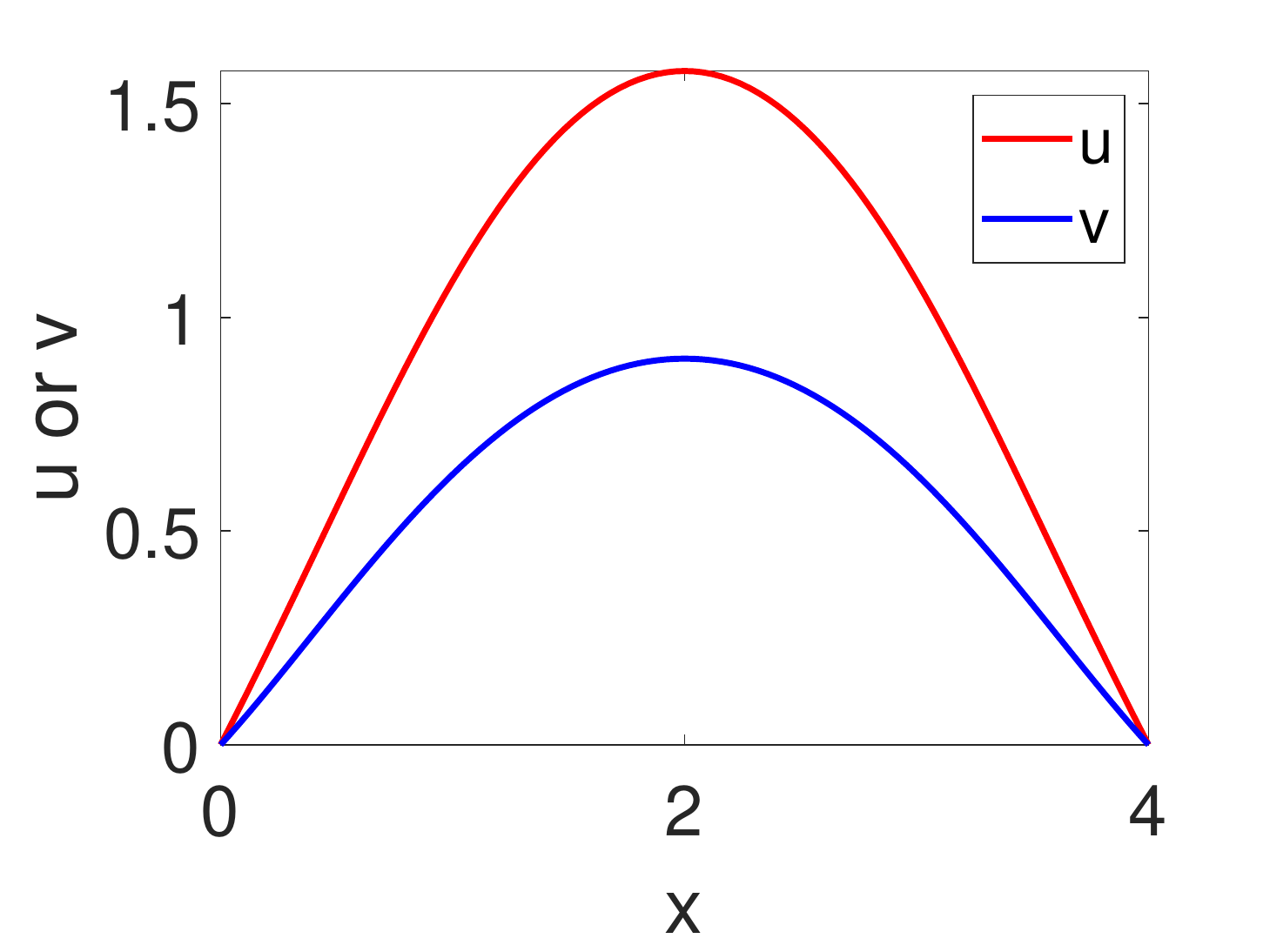}
\includegraphics[width=5.0cm,  height=4.0cm]{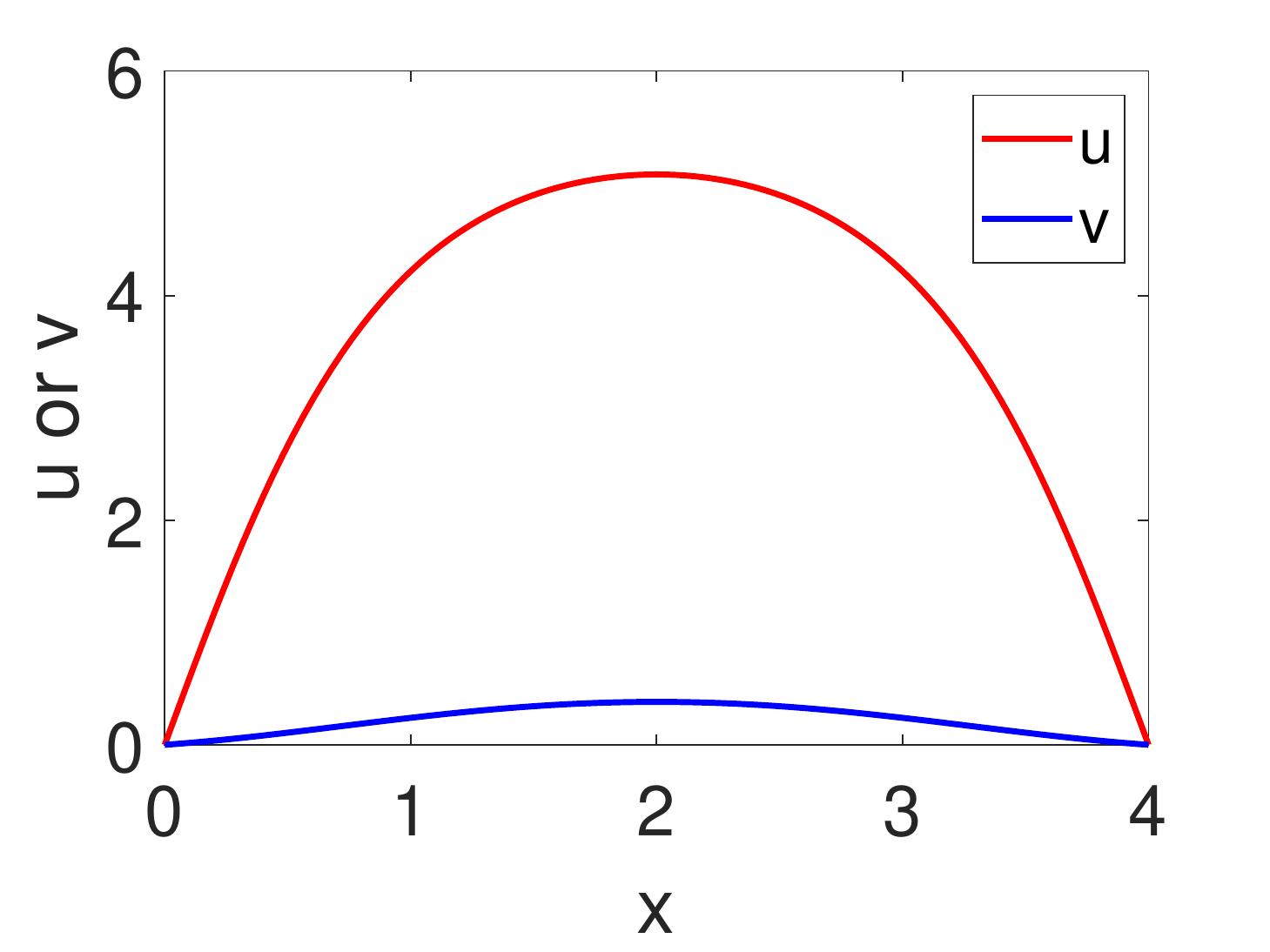}}
\centerline{\small\ \ \ (a)~~$\lambda=-1$ \hspace{2.8cm} (b)~~$\lambda=1.5$
\hspace{2.8cm} (c)~~$\lambda=5$}
\vspace{0.2cm}
\caption{\footnotesize Solution $(u(x,500),v(x,500))$ of
  \eqref{ys11} with $F(v)=\frac{v^2}{1+v^2}$ (Holling III type) in the interval
  $\Omega=(0,4)$, where parameter values are: $d(v)=1$, $\chi(v)=1$, $D=1$,
  $\mu=2$, $\gamma=0.6$, and the initial value are:
  $u_0(x)=0.1+0.1\sin(5x)$, $v_0(x)=0.1+0.1\sin(5x)$.}
\label{fig4}
\end{figure}

\begin{itemize}[itemsep= -2 pt,topsep = 0 pt, leftmargin = 20 pt]
  \item[(3)] If the prey-taxis term is ignored (i.e., $\chi(v)=0$), then the
  bifurcation at $(\lambda_\mu,0,\omega_\mu)$ is supercritical since
  \begin{equation*}
  \lambda^{\prime}_\mu(0)=\left(\int_{\Omega}\Phi_\mu^2dx
  \right)^{-1}\left[\int_{\Omega}d^{\prime}(\omega_\mu)
  \psi_\mu\left|\nabla\Phi_\mu\right|^2dx
  +\int_{\Omega}\Phi_\mu^3dx-\int_{\Omega}\gamma F^{\prime}
  (\omega_\mu)\psi_\mu\Phi_\mu^2dx\right]>0.
  \end{equation*}
However, when the prey-taxis term is considered (i.e., $\chi(v)>0$), the
  bifurcation at $(\lambda_\mu,0,\omega_\mu)$ may be supercritical or
  subcritical, since $\lambda^{\prime}_\mu(0)$ may be positive or negative
  if the appropriate values of parameters in (\ref{ys314}) are selected.
\end{itemize}

There are various interesting questions that deserve further exploration.
Noticing that the bifurcation at $(\lambda_\mu,0,\omega_\mu)$ may be
supercritical or subcritical, this is very likely to lead to the
multiplicity of positive solutions to \eqref{ys12} in a small neighborhood
of $(\lambda_\mu,0,\omega_\mu)$ in $\mathbb{R}\times X\times X$.
Hence, an interesting question is how to establish the multiplicity of
positive solutions to \eqref{ys12}. Compared with the classical
reaction-diffusion equations, the introduction of the prey-taxis term
brings many difficulties in the analytical treatment. While a more
interesting question is how to analyze the asymptotic behavior of positive
solutions to \eqref{ys12} when the prey-taxis coefficient is large.
Additionally, we have shown that positive solutions of \eqref{ys12}
emanate from the semitrivial solution branch
$\Gamma_{v}=\{(\lambda,0,\omega_\mu): \lambda\in \mathbb{R}\}$ if and
only if $\lambda=\lambda_\mu$, which also determines the stability of the
prey-only steady state $(0,\omega_\mu)$, but a further information on
the principal eigenvalue $\lambda_\mu$ still remains open. All these
questions are very interesting and worthwhile to pursue in the future.

\end{document}